\numberwithin{equation}{section}
\theoremstyle{plain}
\newtheorem{theorem}{Theorem}[section]
\newtheorem{lemma}[theorem]{Lemma}
\theoremstyle{plain}
\numberwithin{equation}{section}
\theoremstyle{remark}
\newtheorem{remark}[theorem]{Remark}
\DeclareMathOperator{\supp}{supp}
\DeclareMathOperator{\dist}{dist}
\DeclareMathOperator{\vol}{vol}
\begin{document}

\date{}
\title
[variance of lattice point counting]{variance of lattice point counting in some special shells in $\mathbb{R}^d$ }
\author[t. jiang]{tao jiang}

\address{Tao Jiang\\Department of Mathematics\\
University of Science and Technology of China\\
Hefei, Anhui Province 230026, People's Republic of China}

\email{jt1023@mail.ustc.edu.cn}

\thanks{}

\begin{abstract}
We study the variance of the random variable that counts the number of lattice points in some shells generated by a special class of finite type domains in $\mathbb R^d$. The proof relies on estimates of the Fourier transform of indicator functions of convex domains.
\end{abstract}

\subjclass[2010]{11P21, 42B10, 60D05}
\keywords{Lattice point, finite type, Fourier transform.}

\maketitle
%%%%%%%%%%%%%%%%%%%%%%%%%%%%%%%%%%%%%%%%%%%%%%%%%%%%%%%%%%%%%%%%%%%%%%%%%%%%%%%%%%%%%%%%%%%%%%%%%%%%%%%%%%%%%%%%%%%%%%%%%%%%%%%%%%%%%%%%%%%%%%%%%%%%%%%%%%%%%

%%%%%%%%%%%%%%%%%%%%%%%%%%%%%%%%%%%%%%%%%%%%%%%%%%%%%%%%%%%%%%%%%%%%%%%%%%%%%%%%%%%%%%%%%%%%%%%%%%%%%%%%%%%%%%%%%%%%%%%%%%%%%%%%%%%%%%%%%%%%%%%%%%%

\section{Introduction}
The classical lattice point problem associated with a compact convex domain $\mathcal{B}\subseteq \mathbb{R}^d$ is about counting the number of lattice points $\mathbb{Z}^d$
in the enlarged domain $r\mathcal{B}$ and the main problem is to study the remainder
\begin{equation*}
R_{\mathcal{B}}(r)=\#\left(r\mathcal{B}\cap\mathbb{Z}^d\right)-\vol(\mathcal{B})r^d
\end{equation*}
for $r\geq 1$. If $\mathcal B$ is the unit disk centered at the origin, it is the well-known Gauss circle problem and it is conjectured that $R_{\mathcal{B}}(r)=O_{\epsilon}(r^{1/2+\epsilon})$ for any $\epsilon>0$. For the Gauss circle problem, Huxley \cite{Huxley} in 2003 proved that $R_{\mathcal{B}}(r)=O(r^{131/208+\epsilon})$ and Bourgain and Watt \cite{BW2017} obtained $R_{\mathcal{B}}(r)=O(r^{517/824+\epsilon})$ in 2017. Many authors made efforts to study general domains under different curvature assumptions of the boundary $\partial \mathcal B$. We refer interested readers to Ivi\'{c}, Kr\"{a}tzel, K\"{u}hleitner and Nowak \cite{survey2006} and Nowak \cite{survey2014} which gave excellent overview of the development of this problem.

 Assume $\mathcal B\subseteq \mathbb{R}^d$ is a convex domain which contains in its interior the origin and has smooth boundary with nonzero Gaussian curvature. Denote by
 \begin{equation*}
 \mathcal B(r,t)=(r+t/2)\mathcal B\setminus(r-t/2)\mathcal B
 \end{equation*}
 the shell generated by the domain $\mathcal B$ and
 \begin{equation*}
 N_{\mathcal B(r,t)}(u)=\#\{n\in \mathbb Z^d:n\in(\mathcal B(r,t)-u)\}
  \end{equation*}
  the random variable that counts the number of lattice points in the shifted shell $\mathcal B(r,t)-u$. Then
Colzani, Gariboldi and Gigante \cite{CGG} proved that the variance of $N_{\mathcal B(r,t)}(u)$ is asymptotic to $\vol(\mathcal B(r,t))$ as $r$ goes to infinity. Here $0<t\le r^{-\alpha}$ for every $\alpha>(d-1)/(d+1)$ and $u$ is uniformly distributed in $\mathbb T^d$.

In this paper, we study the variance of the random variable
\begin{equation*}
N_{\mathcal D(r,t)}(u)=\#\left\{n\in\mathbb Z^d:n\in(\mathcal D(r,t)-u)\right\},
\end{equation*}
 where the shell
\begin{equation}\label{between two enlarged domains}
 \mathcal D(r,t)=(r+t/2)\mathcal{D} \backslash(r-t/2)\mathcal{D}
\end{equation}
is generated by convex domain
\begin{equation}\label{modeldomain}
\mathcal{D}=\left\{x\in\mathbb{R}^{d}  :  \sum _{p=0}^{n-1}\left(\sum _{l=1+d_{p}}^{d_{p+1}} x_{l}^{\omega_{l}}
\right)^{m_{p+1}}\le 1\right\}.
\end{equation}
Here $n$, $p$, $m_{p+1}, d_{p+1}\in \mathbb{N}\cup \{0\}$\footnote{In this paper we use $\mathbb N=\{1,2,\ldots\}$.}, $0=d_0< d_1< \cdots <d_{n-1}<d_n=d$, $d\ge 3$ and $4\le\omega_l\in 2\mathbb{N}$. The authors in \cite{CGG} pointed out that the curvature assumption is important and gave some examples to show that if the curvature of some points on $\partial\mathcal B$ vanishes, the variance may be much larger than the volume of the shell. Notice that the examples they gave are some shells generated by polyhedra. We are interested in the situation between these two cases where the shells are generated by finite type domains\footnote{A compact convex domain $\mathcal B$ is a finite type domain if for any point $x\in \partial \mathcal B$, any one dimensional tangent line of $\partial \mathcal B$ at $x$ makes finite order of contact with $\partial \mathcal B$. See \cite[p. 351]{BNW} for an analytic definition.}. Since it is difficult to study the most general finite type domain, we start with some special cases. Notice that the following special cases are considered in the classical lattice point problem. The super spheres
\begin{equation}\label{supersphere}
\left\{x\in \mathbb R^d:|x_1|^{\omega}+\cdots+|x_d|^{\omega}\le 1\right\}
\end{equation}
were considered in Randol \cite{randol} for even $\omega\ge 3$ and Kr\"{a}tzel \cite{kratzel_odd} for odd $\omega\ge 3$. Kr\"{a}tzel \cite{kratzel_2002} and Kr\"{a}tzel and Nowak \cite{K-N-2008,K-N-2011} studied the convex domain
\begin{equation}\label{exampledoamin}
\left\{x\in \mathbb R^3: |x_1|^{mk}+\left(|x_2|^k+|x_3|^k\right)^m\le 1\right\}
\end{equation}
with certain assumptions on the reals $k$ and $m$. Motivated by these examples, we consider the shells generated by the domain $\mathcal D$ defined by \eqref{modeldomain}. The domain $\mathcal D$ is formally an extension of \eqref{supersphere} and \eqref{exampledoamin} and the Gaussian curvature of the intersection of $\partial \mathcal D$ and the coordinate planes is zero.

In order to state our main theorem, we first define some notations. For the constants $d_i$'s and $m_j$'s appearing in \eqref{modeldomain}, given any $1\leq l\leq d$, there exists a unique $0\leq p(l)\leq n-1$ such that $1+d_{p(l)}\leq l\leq d_{p(l)+1}$. For $1\le q\le d$, define
\begin{equation}\label{def-mjl}
m_{q,l}=\bigg\{ \begin{array}{ll}
1 & \quad\textrm{if $1\leq l\leq d$ and $p(l)=p(q)$},\\
m_{p(l)+1} & \quad\textrm{if $1\leq l\leq d$ and  $p(l)\ne p(q)$}.
\end{array}
\end{equation}
For $1\le q\le d$ and $1\le j\le d-1$, let
\begin{equation}\label{alphaqj}
\alpha_{q,j}=\!\!\max_{\substack{S\in P_j\{1,2,\ldots,d\}\\S\owns q}}\left\{d-j-\sum_{l=1,l\notin S}^{d}\frac{2}{m_{q,l}\omega_l},\frac{d-j-\sum_{l=1,l\notin S}^{d}2/(m_{q,l}\omega_l)}{\sum_{l=1,l\notin S}^{d}2/(m_{q,l}\omega_l)}\right\}
\end{equation}
and
\begin{equation}\label{alphaj}
\alpha_j=\max\{\alpha_{1,j},\alpha_{2,j},\ldots,\alpha_{d,j}\}.
\end{equation}
Here $P_j\{1,2,\ldots,d\}$ denotes the collection of all subsets of $\{1,2,\ldots,d\}$ having $j$ elements. It seems that $\alpha_{q,j}$'s are complicated. These constants appear in the proof of Lemma \ref{lemmaZd(j)} below. To help readers understand them better, Remark \ref{remark} gives some examples of the domain $\mathcal D$ and the corresponding constants $m_{q,l}$'s and $\alpha_{q,j}$'s.

It is easy to verify that the expectation of $N_{\mathcal D(r,t)}(u)$ is the volume of $\mathcal D(r,t)$. For the variance we have
\begin{theorem}\label{variance}
For the domain $\mathcal D(r,t)$ defined by \eqref{between two enlarged domains}, if $1< r<+\infty$, $0<t\le Cr^{-\alpha}$ for some constant $C>0$ and $\alpha>\max\{\alpha_1,\alpha_2,\ldots,\alpha_{d-1}\}$, then there is a constant $\beta>0$ such that
\begin{equation}\label{varianceequation}
\int_{\mathbb{T}^d}\left|N_{\mathcal D(r,t)}(u)-\vol(\mathcal D(r,t))\right|^2\,\mathrm du=\vol(\mathcal D(r,t))
\left(1+O\left(t^\beta\right)\right),
\end{equation}
where the implicit constant is independent of $r$ and $t$.
\end{theorem}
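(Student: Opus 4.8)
The plan is to pass to the Fourier side. Writing $N_{\mathcal D(r,t)}(u)=\sum_{n\in\mathbb{Z}^d}\chi_{\mathcal D(r,t)}(n+u)$, this is a bounded function of $u\in\mathbb{T}^d$ whose $k$-th Fourier coefficient is $\widehat{\chi_{\mathcal D(r,t)}}(k)$, the value at $k$ of the Euclidean Fourier transform (with $\widehat f(\xi)=\int_{\mathbb{R}^d}f(x)e^{-2\pi i\xi\cdot x}\,\mathrm dx$); since the $k=0$ coefficient equals $\vol(\mathcal D(r,t))$, Parseval's identity gives
\begin{equation*}
\int_{\mathbb{T}^d}\bigl|N_{\mathcal D(r,t)}(u)-\vol(\mathcal D(r,t))\bigr|^2\,\mathrm du=\sum_{k\in\mathbb{Z}^d\setminus\{0\}}\bigl|\widehat{\chi_{\mathcal D(r,t)}}(k)\bigr|^2 .
\end{equation*}
By the dilation identity $\widehat{\chi_{s\mathcal D}}(\xi)=s^d\widehat{\chi_{\mathcal D}}(s\xi)$ one has $\widehat{\chi_{\mathcal D(r,t)}}(k)=(r+\tfrac t2)^d\widehat{\chi_{\mathcal D}}\bigl((r+\tfrac t2)k\bigr)-(r-\tfrac t2)^d\widehat{\chi_{\mathcal D}}\bigl((r-\tfrac t2)k\bigr)$, together with the sharper form $\widehat{\chi_{\mathcal D(r,t)}}(k)=\int_{r-t/2}^{r+t/2}\tfrac{d}{ds}\bigl[s^d\widehat{\chi_{\mathcal D}}(sk)\bigr]\,\mathrm ds$, which exhibits a gain of a factor $\sim t$ while $t|k|$ stays small; this gain is what makes the variance have size $\vol(\mathcal D(r,t))\sim t\,r^{d-1}$ rather than that of a surface area $\sim r^{d-1}$.

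Next I would split $\mathbb{Z}^d\setminus\{0\}=\bigcup_{j=1}^{d}\mathbb{Z}^d(j)$, where $\mathbb{Z}^d(j)$ consists of the lattice points with exactly $j$ nonzero coordinates: the set $\mathbb{Z}^d(d)$ will produce the main term, and each $\mathbb{Z}^d(j)$ with $1\le j\le d-1$ will have to be shown negligible---which is why only $\alpha_1,\dots,\alpha_{d-1}$ enter the hypothesis. On the generic frequencies inside $\mathbb{Z}^d(d)$, namely those whose direction $k/|k|$ avoids a shrinking neighbourhood of every coordinate hyperplane, the boundary points of $\mathcal D$ with conormal $\pm k/|k|$ have nonvanishing Gaussian curvature, so the standard stationary-phase asymptotic for $\widehat{\chi_{\mathcal D}}$ applies; the difference of the two dilates then carries an oscillatory factor $e^{2\pi i r\,h(k/|k|)|k|}\sin\bigl(\pi t\,h(k/|k|)|k|\bigr)$, with $h$ the support function of $\mathcal D$, and summing $\bigl|\widehat{\chi_{\mathcal D(r,t)}}(k)\bigr|^2$ over these $k$---split at $|k|\sim t^{-1}$, with the oscillatory cross terms shown negligible---runs parallel to the computation of Colzani--Gariboldi--Gigante \cite{CGG} and yields $\vol(\mathcal D(r,t))\bigl(1+O(t^\beta)\bigr)$.

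It then remains to bound the residual frequencies---the sets $\mathbb{Z}^d(j)$ with $1\le j\le d-1$, and the thin tubes around coordinate hyperplanes inside $\mathbb{Z}^d(d)$, where the curvature of $\partial\mathcal D$ degenerates---by $O\bigl(t^\beta\vol(\mathcal D(r,t))\bigr)$; this is the content of Lemma \ref{lemmaZd(j)} and is the technical core. Near the boundary point of $\mathcal D$ sitting on a $j$-dimensional coordinate subspace the hypersurface is flat, essentially of the form $x_q=1-\sum_{l}c_l|x_l|^{m_{q,l}\omega_l}+\cdots$ in suitable coordinates, with $m_{q,l}$ as in \eqref{def-mjl} recording whether the direction $l$ lies in the same block of \eqref{modeldomain} as $q$; localising there and iterating one-dimensional van der Corput estimates over the blocks yields anisotropic decay bounds for $\widehat{\chi_{\mathcal D}}(\rho\theta)$ for $\theta$ near that subspace, in which the reciprocal orders of contact $2/(m_{q,l}\omega_l)$ appear. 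Inserting these bounds, counting the lattice points of $\mathbb{Z}^d(j)$ inside tubes around coordinate subspaces, and again splitting the $|k|$-range at $t^{-1}$, the total contribution is dominated by an expression of the shape $t\,r^{d-1}\bigl(t\,r^{\alpha}\bigr)^{\delta}$ for some $\delta>0$: the maximum over $S\ni q$ in \eqref{alphaqj} corresponds to the choice of the $j$ large coordinates, the two bracketed expressions to the two $|k|$-ranges, and $\alpha_j$ in \eqref{alphaj} is precisely the threshold to be beaten, so that $\alpha>\max\{\alpha_1,\dots,\alpha_{d-1}\}$ gives the bound with $\beta=\delta\bigl(\alpha-\max\{\alpha_1,\dots,\alpha_{d-1}\}\bigr)>0$.

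The step I expect to be the real obstacle is this last one. One must run the van der Corput iteration uniformly over directions $\theta$ so that it yields exactly the exponents $2/(m_{q,l}\omega_l)$ with no loss, and keep the count of lattice points inside the tubes around coordinate subspaces sharp enough that no spurious power of $r$ creeps in; it is the interplay of these two constraints that pins down the precise---and at first sight opaque---form of $\alpha_{q,j}$. By contrast, the main-term extraction in the second step is, modulo bookkeeping, the nondegenerate situation already treated in \cite{CGG}.
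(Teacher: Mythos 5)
Your decomposition---Parseval, then $\mathbb{Z}^d\setminus\{0\}=\bigcup_{j}\mathbb{Z}^d(j)$, main term from $\mathbb{Z}^d(d)$, remainders from $\mathbb{Z}^d(j)$ with $j\le d-1$ and from tubes around coordinate hyperplanes inside $\mathbb{Z}^d(d)$---is exactly the paper's, and your reading of the two expressions inside the brackets of \eqref{alphaqj} as the two radial regimes $|k|\lessgtr t^{-1}$ is correct (they emerge as the dichotomy $C_{j,1}\lessgtr 1$ in the radial integral of Lemma \ref{lemmaZd(j)}). The observation $\widehat{\chi}_{\mathcal D(r,t)}(k)=\int_{r-t/2}^{r+t/2}\frac{d}{ds}\bigl[s^d\widehat{\chi}_{\mathcal D}(sk)\bigr]\,\mathrm ds$ as the source of the gain in $t$ is a nice way to package what the paper gets from \eqref{splitD(r,t,n)} and the gradient bounds.

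Where the proposal has a real gap is in the sentence ``the main-term extraction in the second step is, modulo bookkeeping, the nondegenerate situation already treated in [CGG]'', and the matching dismissal ``with the oscillatory cross terms shown negligible''. Inside $\mathbb{Z}^d(d)$ the Gaussian curvature $K(x(n))$ still degenerates as $n/|n|$ approaches a coordinate hyperplane, so neither the stationary-phase amplitude $(K(x(n)))^{-1/2}$ nor the remainder $E$ is uniformly controlled; this is precisely what [CGG]'s nonvanishing-curvature hypothesis rules out, and it is the bulk of the paper's work (Lemmas \ref{l4}--\ref{estimateY(r,t,n)}), not Lemma \ref{lemmaZd(j)}. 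Concretely the paper must (i) insert a cutoff $\phi_1^\epsilon$ discarding an $\epsilon$-tube around each coordinate hyperplane and estimate the discarded tube, with $\epsilon$ optimized at the end; (ii) prove a gradient bound on $E$ (Lemma \ref{asymp}, \eqref{est nabla E}) to control $B(r,t,n)$; and (iii)---the technical core---handle the oscillatory sum $Y(r,t)$ carrying $\cos(4\pi r\,x(n)\cdot n-\cdots)$ by Poisson summation, a new oscillatory-integral estimate on the level sets $\{x(\xi)\cdot\xi=h\}$ (Lemma \ref{two}, which in turn needs the curvature computation $\widetilde K(\xi)\asymp h^{-(d-1)}K(x(\xi))^{-1}$ of Appendix 3), and a sharp lattice-point count in the thin shell around $\partial(2r\mathcal D)$ that relies on the finite-type counting result of \cite{model2}. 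By contrast, Lemma \ref{lemmaZd(j)}, which you flag as the main obstacle, is comparatively short once the cap bound of Lemma \ref{lemmacap} is in hand: substitute, compare to a polar-coordinate integral, and split at $|n|\sim t^{-1}$---exactly the mechanism you identified as generating $\alpha_{q,j}$.
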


%%%%%%%%%%%%%%%%%%%%%%%%%%%%%%%%%%%%%%%%%%%%%%%%%%%%%%%%%%%%%%%%%%%%%%%%%%%%%%%%%%%%%%%%%%%%%%%%%%%%%%%%%%%%%%%%%%%%%%%%%%%
%%%%%%%%%%%%%%%%%%%%%%%%%%%%%%%%%%%%%%%%%%%%%%%%%%%%%%%%%%%%%%%%%%%%%%%%%%%%%%%%%%%%%%%%%%%%%%%%%%%%%%%%%%%%%%%%%%%%%%%%%%%
\begin{remark}\label{remark}
For the domain $\mathcal D$ defined by \eqref{modeldomain},
\begin{enumerate}[(1)]
\item if $\omega_1=\cdots=\omega_d$ and $m_1=\cdots=m_n=1$, then
\begin{equation}\label{exampledomain1}
\mathcal D=\left\{x\in\mathbb{R}^{d}  :  x_1^{\omega_1}+\cdots+x_d^{\omega_1}\le 1\right\}.
\end{equation}
Given any $1\le q,l\le d$ and $1\le j\le d-1$, we have $m_{q,l}=1$ and
\begin{equation*}
\alpha_{q,j}=\frac{(d-j)(1-2/\omega_1)}{\min\{1,2(d-j)/\omega_1\}}.
\end{equation*}
\item if $d=3$, $n=2$, $d_1=2$, $\omega_1=4,\omega_2=6,\omega_3=8$ and $m_1=5,m_2=1$, then
 \begin{equation*}\label{exampledomain2}
\mathcal D=\left\{x\in\mathbb{R}^{3}: \left(x_1^{4}+x_2^{6}\right)^{5}+x_3^8\le 1\right\}.
\end{equation*}
Given any $q=1,2$ and $1\le l\le3$, we have $m_{q,l}=1$,
\begin{align*}
 m_{3,l}=\left\{\begin{array}{ll}
5&\text{if $l=1,2,$}\\
1&\text{if $l=3$},
\end{array}
\right.
\end{align*}
$\alpha_{1,1}=17/7$, $\alpha_{1,2}=3$, $\alpha_{2,1}=5/3$, $\alpha_{2,2}=3$, $\alpha_{3,1}=11$ and $\alpha_{3,2}=14$.
\end{enumerate}
\end{remark}

\begin{remark}
We can choose $t=Cr^{-(d-1)}$ for some constant $C>0$ if
\begin{equation}\label{property}
\max\{\alpha_1,\ldots,\alpha_{d-1}\}< d-1\footnote{This is the case for the domain defined by \eqref{exampledomain1} with $\omega_1<2d$. }.
\end{equation}
Then we have the property that the variance converges to the bounded nonzero constant $\vol(\mathcal D)dC$  as $r$ goes to infinity. This is consistent with the result in Cheng, Lebowitz, Major \cite{CLM} in $\mathbb R^2$.
There they take $t=2c/r$ and the variance tends to $4c$ times the area of the domain.

\end{remark}

 \begin{remark}
It is necessary to assume a range of $\alpha$ in the theorem. An example is given in the last part of Appendix to show that if $\alpha$ takes some value less than $\max\{\alpha_1,\ldots,\alpha_{d-1}\}$, the variance is much larger than the volume of $\mathcal D(r,t)$ when $r$ is large enough. But the range of $\alpha$ in Theorem \ref{variance} may not be sharp.
 \end{remark}

\begin{remark}
This paper extends the result in \cite{CGG} from the case of non-vanishing Gaussian curvature to the case where there are boundary points with zero Gaussian curvature. The difficulty resolved in this paper is how to handle these points. Since the problem will be reduced to estimating a sum involving the Fourier transform of the indicator function of $\mathcal D(r,t)$, we shall divide $\partial\mathcal D$ into different parts according to the Gaussian curvature. For the part with zero Gaussian curvature, we apply Theorem 2.2 in \cite[p. 74]{model2} to sum the series and this requires more computation.
\end{remark}

{\it Notations:}  For any convex domain $\mathcal B$, let $n(x)$ be the unit outer normal of the point $x$ on $ \partial \mathcal B$ and $x(\xi)$ be the point on $\partial \mathcal B$ with nonzero outer normal $\xi$. Denote by $K(x)$ the Gaussian curvature of $x$ on $\partial\mathcal B$. Let $\nabla f$ be the gradient and $\nabla^2 f$ be the Hessian matrix of function $f$. $\textbf A^T$ is the transpose of matrix $\mathbf A$.  $C^{\infty}(U)$ is the space of all smooth functions on the open set $U$.
For functions $f$ and $g$, the notation $f\lesssim g$ or $f=O(g)$ means $|f|\le Cg$ for some constant $C>0$ and $f\gtrsim g$ equals $g\lesssim f$. If $f$ and $g$ are non-negative functions, $f\asymp g$ means there are some positive constants $c'$ and $C'$ such that $c'f\le g\le C'f$. Let $\mathbb T^d$ be the unit cube $\{x\in\mathbb R^d: -1/2\le x_i<1/2\,\text{ for}\, 1\le i\le d\}$.

Throughout this paper the notation $\mathcal D$ and $\mathcal D(r,t)$ are the domains defined by \eqref{between two enlarged domains} and \eqref{modeldomain} and $m_{q,l}$ is the constant defined by \eqref{def-mjl}. Denote by $\varepsilon_0$ a small constant such that for any nonzero $\xi\in \mathbb R^d$, there is a $1\le q\le d$ satisfying $|\xi_q|/|\xi|\ge \varepsilon_0$.

%%%%%%%%%%%%%%%%%%%%%%%%%%%%%%%%%%%%%%%%%%%%%%%%%%%%%%%%%%%%%%%%%%%%%%%%%%%%%%%%%%%%%%%%%%%%%%%%%%%%%%%%%%%%%%%%%%%%%%%%%%%%%%%%%%%%%%%%%%%%%%%%%%%%%%%%%%%%%%%%

%%%%%%%%%%%%%%%%%%%%%%%%%%%%%%%%%%%%%%%%%%%%%%%%%%%%%%%%%%%%%%%%%%%%%%%%%%%%%%%%%%%%%%%%%%%%%%%%%%%%%%%%%%%%%%%%%%%%%%%%%%%%%%%%%%%%%%%%%%%%%%%%%%%%%%%%%%%%%%%%%

\section{some estimates}
Before we prove Theorem \ref{variance}, we give some results that will be used later: Lemma \ref{lemma1} and \ref{relation} are some simple results on the Gaussian curvature. The proofs are basic calculations under the local coordinates so we put them in Appendix. Lemma \ref{asymp}--\ref{A and B} are some asymptotic expansions and estimates  related to the Fourier transform of the indicator functions of convex domain and $
\mathcal D(r,t)$. Lemma \ref{two} is an estimate of an oscillatory integral. Given any $x\in \mathbb R^d$, denote by $\hat x_q\in \mathbb R^{d-1}$ the $(d-1)$-dimensional vector obtained by deleting the $q^{th}$ component from $x$. For example, $\hat x_1=(x_2,\ldots,x_d)$. Then we have

\begin{lemma}\label{lemma1}
Let $\mathcal{B}\subseteq \mathbb{R}^d$ ($d\ge 3$) be a
compact convex domain. Assume that given any $x\in\partial\mathcal{B}$, $x$ has a small neighborhood on $\partial \mathcal B$ that can be given as a graph $x_q=f(\hat x_q)$ with $f\in C^{\infty}(\mathbb R^{d-1})$ for some $1\le q\le d$. Then we have the following results.
\begin{enumerate}[(1)]
\item The Gaussian curvature of $\partial \mathcal B$ at $x$ is
\begin{align}\label{Gaussiancurvature}
K(x)=\frac{|\det\left(\nabla^2f(\hat x_q)\right)|}{\left(1+|\nabla f(\hat x_q)|^2\right)^{(d+1)/2}}.
\end{align}
\item If $K(x)\ne0$, the Gauss map $n$ has a smooth inverse in a small neighborhood of $ n(x)$, namely, given any $\xi/|\xi|$ in this neighborhood, there is a unique point
$x(\xi)\in \partial \mathcal B$ such that $n(x(\xi))=\xi/|\xi|$. Moreover,
\begin{equation}\label{inverse of Gauss map}
\frac{\partial x_i(\xi)}{\partial \xi_j}=-\frac{A_{ji}}{\xi_q\det\left(\nabla^2f(\hat x_q)\right)},
\end{equation}
where $A_{ji}$ is the cofactor of $\nabla^2 f(\hat x_q)$ of the entry $\frac{\partial^2 f(\hat x_q)}{\partial{x_i}\partial{x_j}}$ for $1\le i,j\le d$ and $i,j\ne q$.
\begin{equation}\label{inverse of Gauss map q}
\frac{\partial x_i(\xi)}{\partial \xi_q}=-\frac{\det (\textbf A_{qi})}{\xi_q\det\left(\nabla^2f(\hat x_q)\right)},
\end{equation}
where $1\le i\ne q\le d$ and if $i<q$, $\textbf A_{qi}$ denotes the matrix obtained by replacing the $i^{th}$ column of $\nabla^2 f(\hat x_q)$ with $(\nabla f(\hat x_q))^{T}$;  if $i>q$, $\textbf A_{qi}$ denotes the matrix obtained by replacing the $(i-1)^{th}$ column of $\nabla^2 f(\hat x_q)$ with $(\nabla f(\hat x_q))^{T}$.
\end{enumerate}
\end{lemma}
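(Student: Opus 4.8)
The plan is to carry out both parts entirely in the local graph coordinates $x_q=f(\hat x_q)$, reducing everything to elementary multivariable calculus and linear algebra.

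For part (1), I would parametrize the piece of $\partial\mathcal B$ near $x$ by the map $\Phi(\hat x_q)$ sending $\hat x_q$ to the point whose coordinates in the $d-1$ slots other than $q$ equal $\hat x_q$ and whose $q$-th coordinate equals $f(\hat x_q)$. A unit normal along this patch is $N=\pm(1+|\nabla f|^2)^{-1/2}\nu$, where $\nu$ has $q$-th entry $1$ and $l$-th entry $-\partial_l f$ for $l\ne q$. Then the first fundamental form is $g=I_{d-1}+\nabla f(\nabla f)^T$, so by the matrix determinant lemma $\det g=1+|\nabla f|^2$, and the second fundamental form is $h_{ij}=\langle\Phi_{ij},N\rangle=\pm(1+|\nabla f|^2)^{-1/2}\,\partial_i\partial_j f$. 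Since $K=\det h/\det g$, combining these and taking absolute values (the Gaussian curvature of the boundary of a convex body is nonnegative) gives \eqref{Gaussiancurvature}.

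For part (2), the first observation is that \eqref{Gaussiancurvature} together with $K(x)\ne0$ forces $\det(\nabla^2 f(\hat x_q))\ne0$, hence $\nabla^2 f$ is invertible in a small neighborhood. From the formula for $\nu$ above, a nonzero vector $\xi$ is parallel to the outer normal at $\Phi(\hat x_q)$ precisely when $\partial_l f(\hat x_q)=-\xi_l/\xi_q$ for every $l\ne q$. I would therefore consider the map $\Psi:\hat x_q\mapsto\nabla f(\hat x_q)$, whose Jacobian is exactly $\nabla^2 f$; since this is nonsingular, the inverse function theorem makes $\Psi$ a local diffeomorphism, so $\hat x_q(\xi):=\Psi^{-1}\bigl((-\xi_l/\xi_q)_{l\ne q}\bigr)$ is a smooth function of $\xi$ (indeed only of the ratios $\xi_l/\xi_q$) for $\xi/|\xi|$ near $n(x)$, and $x_q(\xi):=f(\hat x_q(\xi))$ completes the smooth inverse Gauss map; uniqueness in the neighborhood follows from injectivity of $\Psi$ there.

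The derivative formulas would then come from implicit differentiation of the identity $\partial_l f(\hat x_q(\xi))=-\xi_l/\xi_q$ for $l\ne q$. Differentiating in $\xi_j$ with $j\ne q$ gives $\sum_{i\ne q}\partial_i\partial_l f\cdot\partial x_i/\partial\xi_j=-\delta_{lj}/\xi_q$, that is, $(\nabla^2 f)\,(\partial x_i/\partial\xi_j)=-I/\xi_q$ as $(d-1)\times(d-1)$ matrices indexed by $\{1,\dots,d\}\setminus\{q\}$; inverting and writing the inverse of $\nabla^2 f$ through cofactors yields \eqref{inverse of Gauss map}. Differentiating in $\xi_q$ gives $\sum_{i\ne q}\partial_i\partial_l f\cdot\partial x_i/\partial\xi_q=\xi_l/\xi_q^2=-\partial_l f/\xi_q$, i.e. $(\nabla^2 f)\,(\partial x_i/\partial\xi_q)_i=-(\nabla f)^T/\xi_q$, and Cramer's rule gives \eqref{inverse of Gauss map q} — the $i<q$ versus $i>q$ split in the statement being exactly the relabeling forced by deleting the $q$-th row and column. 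I expect the only genuine obstacle to be bookkeeping: keeping the sign of the chosen normal consistent and tracking this relabeling carefully; nothing here is mathematically deep, which is why the computation is naturally relegated to the appendix.
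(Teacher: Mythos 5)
Your argument is correct, and it gets to the same formulas by a somewhat different route. For part (1) you compute the first and second fundamental forms directly in the Monge patch $\Phi(\hat x_q)=(\hat x_q, f(\hat x_q))$, obtain $\det g = 1+|\nabla f|^2$ via the matrix determinant lemma and $h_{ij}=\pm(1+|\nabla f|^2)^{-1/2}\partial_i\partial_j f$, and then use $K=\det h/\det g$. The paper instead passes to an orthonormal frame $\{\vec{\mathbf t}^1,\dots,\vec{\mathbf t}^d\}$ adapted to the tangent plane at $x'$, expresses the boundary as a new graph $X_d=g(X_1,\dots,X_{d-1})$ with $\nabla g(\vec 0)=0$, and reads off $K=|\det\nabla^2 g(\vec 0)|$ after a chain-rule computation relating $\nabla^2 g$ to $\nabla^2 f$. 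Your approach is the standard textbook computation and is arguably cleaner; the paper's rotated-coordinate computation is equivalent but involves more bookkeeping with the transition matrix $(t_j^i)$. For part (2) the two approaches are essentially identical: the paper's map $\mathbf G$ is nothing but the equations $\xi_l+\xi_q\partial_l f=0$, which is precisely your condition $\partial_l f(\hat x_q)=-\xi_l/\xi_q$, and both invoke the implicit (equivalently inverse) function theorem since the relevant Jacobian is $\xi_q^{d-1}\det\nabla^2 f\ne 0$. Your version is actually a little more complete here: the paper states that \eqref{inverse of Gauss map} and \eqref{inverse of Gauss map q} ``can be obtained directly by calculation,'' whereas you spell out the implicit differentiation giving $(\nabla^2 f)\,(\partial x_i/\partial\xi_j)_{i,j\ne q}=-I/\xi_q$ and $(\nabla^2 f)\,(\partial x_i/\partial\xi_q)_{i\ne q}=-(\nabla f)^T/\xi_q$, from which the cofactor expansion and Cramer's rule (with the $i<q$ versus $i>q$ index shift) give the stated formulas.
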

%%%%%%%%%%%%%%%%%%%%%%%%%%%%%%%%%%%%%%%%%%%%%%%%%%%%%%%%%%%%%%%%%%%%%%%%%%%%%%%%%%%%%%%%%
\begin{proof}
See Appendix 1.
\end{proof}
If $|\xi_q|\asymp |\xi|$, from \eqref{inverse of Gauss map} and \eqref{inverse of Gauss map q} we have
\begin{equation*}
\frac{\partial x_i(\xi)}{\partial \xi_j}\lesssim |\xi|^{-1}(K(x(\xi)))^{-1}.
\end{equation*}
Moreover, let $\nu=(\nu_1,\ldots,\nu_d)$ be a multi-index. Then
\begin{equation}\label{partial x(xi)}
\frac{\partial^{|\nu|} x_i(\xi)}{\partial \xi^{\nu}}\lesssim |\xi|^{-|\nu|}(K(x(\xi)))^{-1-2(|\nu|-1)},
\end{equation}
Combing \eqref{Gaussiancurvature} and \eqref{partial x(xi)} yields
\begin{equation}
 \frac{\partial^{|\nu|} (K(x(\xi)))^{-\gamma}}{\partial \xi^{\nu}}\lesssim |\xi|^{-|\nu|}(K(x(\xi)))^{-\gamma-2|\nu|}\label{partial K}
\end{equation}
for some $\gamma>0$.

\begin{lemma}\label{relation}
Let $1\le q\le d$ be an integer. For the domain $\mathcal D$, given any nonzero $\xi\in \mathbb R^d$ with $|\xi_q|/|\xi|\ge \varepsilon_0$, we have
\begin{equation}\label{relation between point and direction}
\prod _{i=1,i\ne q}^{d}\left(|\xi_i|/|\xi|\right)^{\frac{m_{q,i}\omega_i-2}{m_{q,i}\omega_i-1}}
\lesssim K(x(\xi))\lesssim\prod _{i=1,i\ne q}^{d}\left(|\xi_i|/|\xi|\right)^{\frac{\omega_i-2}{\omega_i-1}},
\end{equation}
where the implicit constants only depend on the domain $\mathcal D$ and $\varepsilon_0$.
\end{lemma}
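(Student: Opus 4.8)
The plan is to pass from the Gaussian curvature of $\partial\mathcal D$ at $x(\xi)$ to the Hessian of the defining polynomial $F(x)=\sum_{p=0}^{n-1}G_p(x)^{m_{p+1}}$, where $G_p(x)=\sum_{l=1+d_p}^{d_{p+1}}x_l^{\omega_l}$, to exploit the block structure of $\nabla^2F$, and then to translate the resulting bound into the ratios $\eta_i:=|\xi_i|/|\xi|$. Since $\partial_iF(x)=m_{p(i)+1}\,G_{p(i)}(x)^{m_{p(i)+1}-1}\,\omega_i x_i^{\omega_i-1}$, one checks that $\nabla F\ne0$ on $\{F=1\}$, so $\partial\mathcal D$ is a smooth convex hypersurface with outer unit normal $\nabla F/|\nabla F|$ and, by compactness, $|\nabla F|\asymp1$ on $\partial\mathcal D$. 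Writing $x=x(\xi)$ and $G_p:=G_p(x)$, one has $\eta_i\asymp|\partial_iF(x)|=m_{p(i)+1}\,\omega_i\,G_{p(i)}^{m_{p(i)+1}-1}|x_i|^{\omega_i-1}$. First I would reduce to the case that all $\eta_i>0$: if $\xi_i=0$ for some $i\ne q$ then $x_i=0$, and since every $\omega_l\ge4$ a short computation forces the $i$th row and column of $\nabla^2F(x)$ and the $i$th entry of $\nabla F(x)$ to vanish, so $K(x(\xi))=0$, while both products in \eqref{relation between point and direction} also vanish; and $\eta_q\ge\varepsilon_0>0$ in any case.

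Next I would represent the relevant piece of $\partial\mathcal D$ near $x(\xi)$ as a graph $x_q=f(\hat x_q)$, which is legitimate since $\eta_q\ge\varepsilon_0$. As $\eta_q\ge\varepsilon_0$ forces $|\partial_qF|\gtrsim|\nabla F|$, one gets $1+|\nabla f|^2\asymp1$, so by \eqref{Gaussiancurvature}, $K(x(\xi))\asymp|\det\nabla^2f(\hat x_q)|$. Combining this with $|\nabla F|\asymp1$, the standard bordered–Hessian formula for the Gaussian curvature of a level hypersurface, and the positive definiteness of $\nabla^2F(x)$ once all $x_i\ne0$, one obtains
\[
K(x(\xi))\asymp\det(\nabla^2F)\,(\nabla F)(\nabla^2F)^{-1}(\nabla F)^{T}.
\]
Now $\nabla^2F(x)$ is block diagonal, the block $H_p$ on the variables $l\in\{1+d_p,\dots,d_{p+1}\}$ being a positive diagonal matrix plus a rank-one term; the matrix determinant lemma and the Sherman–Morrison formula, together with $\sum_{l=1+d_p}^{d_{p+1}}\frac{\omega_l}{\omega_l-1}x_l^{\omega_l}\asymp G_p$, give for each $p$
\[
\det H_p\asymp G_p^{(d_{p+1}-d_p)(m_{p+1}-1)}\prod_{l=1+d_p}^{d_{p+1}}|x_l|^{\omega_l-2},\qquad g^{(p)}H_p^{-1}(g^{(p)})^{T}\asymp G_p^{m_{p+1}},
\]
$g^{(p)}$ being the $p$th block of $\nabla F$. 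Since $\sum_pG_p^{m_{p+1}}=F(x)=1$, this produces
\[
K(x(\xi))\asymp\prod_{p=0}^{n-1}\Bigl(G_p^{(d_{p+1}-d_p)(m_{p+1}-1)}\prod_{l=1+d_p}^{d_{p+1}}|x_l|^{\omega_l-2}\Bigr).
\]

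To finish I would rewrite this in the $\eta_i$. From $\eta_l\asymp G_{p(l)}^{m_{p(l)+1}-1}|x_l|^{\omega_l-1}$ one has $|x_l|^{\omega_l-2}\asymp\eta_l^{(\omega_l-2)/(\omega_l-1)}\,G_{p(l)}^{-(m_{p(l)+1}-1)(\omega_l-2)/(\omega_l-1)}$; substituting, using $\sum_{l=1+d_p}^{d_{p+1}}\frac{1}{\omega_l-1}=(d_{p+1}-d_p)-\sum_{l=1+d_p}^{d_{p+1}}\frac{\omega_l-2}{\omega_l-1}$ and $\eta_q\asymp1$, one gets
\[
K(x(\xi))\asymp\Bigl(\prod_{i=1,\,i\ne q}^{d}\eta_i^{\frac{\omega_i-2}{\omega_i-1}}\Bigr)\prod_{p=0}^{n-1}G_p^{(m_{p+1}-1)\sum_{l=1+d_p}^{d_{p+1}}\frac{1}{\omega_l-1}}.
\]
Each exponent of $G_p$ is nonnegative and each $G_p\le1$, so the last product is $\le1$: this is the upper bound. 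For the lower bound, $\eta_q\ge\varepsilon_0$ forces $G_{p(q)}^{m_{p(q)+1}-1}\gtrsim\varepsilon_0$, hence $G_{p(q)}\asymp1$, so the factor of the block containing $q$ is $\asymp1$; and for a block $p$ with $m_{p+1}\ge2$ one checks, using only $|x_l|^{\omega_l}\le G_p$ and $\eta_l\asymp G_p^{m_{p+1}-1}|x_l|^{\omega_l-1}$, that $G_p^{(m_{p+1}-1)\sum_l1/(\omega_l-1)}\gtrsim\prod_{l}\eta_l^{\frac{1}{\omega_l-1}-\frac{1}{m_{p+1}\omega_l-1}}$. Feeding this in and using $\frac{m\omega-2}{m\omega-1}-\frac{\omega-2}{\omega-1}=\frac{1}{\omega-1}-\frac{1}{m\omega-1}$ together with the definition \eqref{def-mjl} of $m_{q,l}$ yields $K(x(\xi))\gtrsim\prod_{i\ne q}\eta_i^{(m_{q,i}\omega_i-2)/(m_{q,i}\omega_i-1)}$; throughout, the implied constants depend only on $\mathcal D$ and $\varepsilon_0$.

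The hard part will be the middle step: organizing the bordered–Hessian/block-determinant computation and the ensuing bookkeeping of exponents, and — more essentially — coping with the fact that the system relating $(x(\xi),G_0,\dots,G_{n-1})$ to the direction $\xi$ is genuinely implicit (the powers of $G_p$ attached to the various $\eta_l$ differ), which is precisely why only a range, and not a sharp asymptotic, can be expected. The lower bound then depends on locating where the loss occurs: the potentially troublesome block $p(q)$ is in fact harmless because $\eta_q\ge\varepsilon_0$ pins down $G_{p(q)}\asymp1$, while in the remaining blocks the crude inequality $|x_l|\le G_p^{1/\omega_l}$ is exactly what opens the gap between the exponents $\frac{\omega_i-2}{\omega_i-1}$ and $\frac{m_{q,i}\omega_i-2}{m_{q,i}\omega_i-1}$.
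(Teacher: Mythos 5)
Your proof is correct, and it reaches the same two bounds by a genuinely different technical route. The paper parametrizes $\partial\mathcal D$ near $x(\xi)$ as a graph $x_q=f(\hat x_q)$, computes $\det\nabla^2 f$ directly (stated as a ``straightforward calculation''), derives $\eta_i\asymp G_{p(i)}^{m_{q,i}-1}x_i^{\omega_i-1}$ from $\xi/|\xi|\asymp(-\nabla f,1)$, and extracts the two exponents from the two-sided bound $\eta_i^{1/(\omega_i-1)}\lesssim x_i\lesssim\eta_i^{1/(m_{q,i}\omega_i-1)}$. You instead work with the defining polynomial $F$ itself, write the curvature via the level-set (bordered Hessian) formula $K\asymp\det(\nabla^2F)\,(\nabla F)(\nabla^2F)^{-1}(\nabla F)^T$, exploit the block structure of $\nabla^2F$, and use the matrix-determinant lemma and Sherman--Morrison to compute $\det H_p\asymp G_p^{(d_{p+1}-d_p)(m_{p+1}-1)}\prod_l|x_l|^{\omega_l-2}$ and $g^{(p)}H_p^{-1}(g^{(p)})^T\asymp G_p^{m_{p+1}}$; the constraint $\sum_pG_p^{m_{p+1}}=1$ then collapses the gradient term. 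Both routes arrive at the same intermediate asymptotic $K\asymp\prod_{i\ne q}G_{p(i)}^{m_{q,i}-1}x_i^{\omega_i-2}$, and both final eliminations of $x_i$ in favor of $\eta_i$ use the same pair of inequalities (you phrase them as $|x_l|^{\omega_l}\le G_p$ and $G_p\le1$, the paper as its inequality (4.4)). What your version buys is transparency: the block-diagonal Hessian of $F$ and Sherman--Morrison make it structurally clear why the exponents $(m_{p+1}-1)$ and the quantities $G_p$ appear, rather than relying on an unworked ``straightforward calculation'' of $\det\nabla^2 f$, and the reduction to the level-set formula sidesteps the change of variables to graph coordinates. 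Your treatment of the degenerate case $\xi_i=0$ (showing the $i$th row/column of $\nabla^2F$ and $\partial_iF$ vanish so both sides of the inequality are zero) is also a cleaner handling of that edge case than the paper's implicit one.
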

%%%%%%%%%%%%%%%%%%%%%%%%%%%%%%%%%%%%%%%%%%%%%%%%%%%%%
\begin{proof}
See Appendix 2.
\end{proof}
If there is an integer $1\le i\le d$ such that $\xi_i/|\xi|=0$, from \eqref{relation between point and direction} we know $K(x(\xi))=0$.  For $1\le j\le d$, define
\begin{equation}\label{Zd(j)}
\mathbb{Z}^d(j)=\cup_{S\in P_j\{1,2,\ldots,d\}}\{n\in \mathbb Z^d: n_i\ne 0\,\text{if}\, i\in S,\,\text{otherwise}\, n_i=0\},
\end{equation}
where $P_j\{1,2,\ldots,d\}$ denotes the collection of all subsets of $\{1,2,\ldots,d\}$ having $j$ elements. Given any $n\in \mathbb Z^d(j)$,  it is easy to verify that $K(x(n))=0$ if $1\le j\le d-1$ and $K(x(n))\ne 0$ if $j=d$.

\begin{lemma} \label{asymp}
Assume $\mathcal{B}\subseteq \mathbb{R}^d\,(d\ge 3)$ is a compact convex domain and its boundary is a smooth hypersurface. Then given any $N\in \mathbb N$ and nonzero $\xi \in \mathbb R^d$ with $K(x(\pm\xi))\ne 0$, we have
\begin{equation}\label{expansion of ft}
\widehat{\chi}_{\mathcal{B}}(\xi)=a(\xi)|\xi|^{-(d+1)/2}+E(\xi),
\end{equation}
where
\begin{align*}
a(\xi)&=(2\pi i)^{-1}e^{-2\pi ix(-\xi)\cdot\xi-\pi i(d-1)/4}(K(x(-\xi)))^{-1/2}\\
&\quad-(2\pi i)^{-1}e^{-2\pi ix(\xi)\cdot\xi+\pi i(d-1)/4} (K(x(\xi)))^{-1/2}
\end{align*}
and
\begin{align}\label{est nabla E}
|E(\xi)|+|\nabla E(\xi)|\lesssim |\xi|^{-(d+3)/2}\delta^{-(d+5)/2}+|\xi|^{-N-1}\delta^{-4N}.
\end{align}
Here $\delta=\min\left\{K(x(\xi)), K(x(-\xi))\right\}$ and the implicit
constant only depends on $\mathcal{B}$ and $N$.
\end{lemma}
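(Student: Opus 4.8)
The plan is to turn $\widehat\chi_{\mathcal B}(\xi)$ into an oscillatory integral over $\partial\mathcal B$ and then run a stationary phase argument in which the dependence on the curvature is carried through every estimate. First I would integrate by parts via the divergence theorem: the vector field $F(x)=\tfrac{i\xi}{2\pi|\xi|^2}e^{-2\pi i x\cdot\xi}$ satisfies $\operatorname{div}F=e^{-2\pi i x\cdot\xi}$, so
\[
\widehat\chi_{\mathcal B}(\xi)=\int_{\partial\mathcal B}F(x)\cdot n(x)\,\mathrm d\sigma(x)=\frac{i}{2\pi|\xi|^2}\int_{\partial\mathcal B}\bigl(\xi\cdot n(x)\bigr)e^{-2\pi i x\cdot\xi}\,\mathrm d\sigma(x).
\]
The phase $x\mapsto-2\pi\,x\cdot\xi$ restricted to $\partial\mathcal B$ has critical points exactly at the two poles $x(\pm\xi)$ (where $n(x)\parallel\xi$), and $K(x(\pm\xi))\ne 0$ says precisely that these are nondegenerate; indeed the tangential gradient of $x\cdot\xi$ on $\partial\mathcal B$ is the tangential projection of $\xi$, which vanishes only there. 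A smooth partition of unity then splits the surface integral into a piece near $x(\xi)$, a piece near $x(-\xi)$, and a piece supported away from both poles.

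On the two polar pieces I would parametrize $\partial\mathcal B$ near $x(\pm\xi)$ as a graph over the tangent plane at that point, so that (after translating the pole to the origin) the phase is $2\pi|\xi|\,\psi(y)$ with $\nabla\psi(0)=0$ and $\nabla^2\psi(0)=\mp(\text{second fundamental form})$; in particular $|\det\nabla^2\psi(0)|=K(x(\pm\xi))$, while the amplitude --- $\xi\cdot n(x)$ times the smooth surface Jacobian --- is a smooth function of $y$ whose derivatives are all $O(|\xi|)$ with constants depending only on $\mathcal B$. Since $\partial\mathcal B$ is smooth and compact, $\nabla^2\psi(0)$ is bounded, hence its least eigenvalue is $\gtrsim K(x(\pm\xi))$ and $\|(\nabla^2\psi(0))^{-1}\|\lesssim\delta^{-1}$. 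Feeding this into the stationary phase formula in quantitative form --- keeping one term, with the remainder controlled by finitely many derivatives of the amplitude times a power of $\|(\nabla^2\psi(0))^{-1}\|$ bounded in terms of $d$ --- and multiplying back by $\tfrac{i}{2\pi|\xi|^2}$, I would recover the main term $a(\xi)|\xi|^{-(d+1)/2}$: the two exponentials from the value of the phase at the poles, the Fresnel factors $e^{\pm\pi i(d-1)/4}$ from the signatures of $\nabla^2\psi(0)$ (positive at $x(\xi)$, negative at $x(-\xi)$), and the factor $K(x(\pm\xi))^{-1/2}$ from $|\det\nabla^2\psi(0)|^{-1/2}$. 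The second term of the expansion, one power of $|\xi|^{-1}$ down, together with the remainder after two terms, carries the $\delta^{-1/2}$ prefactor times the further inverse-Hessian powers and is bounded by the first error term $|\xi|^{-(d+3)/2}\delta^{-(d+5)/2}$.

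On the non-stationary piece the surface gradient of the phase is $\gtrsim|\xi|\,\delta\,\rho$ on the support of the cutoff, where $\rho$ is the localization radius, so repeated integration by parts in the surface variable --- each step producing a factor $(|\xi|\delta\rho)^{-1}$ and a derivative of the cutoff $\lesssim\rho^{-1}$ --- gives an arbitrarily large power of $|\xi|^{-1}$ at the price of a corresponding negative power of $\delta$; together with the tail of the stationary phase expansion on the two polar pieces (where one also stops after finitely many terms) and after choosing the number of steps in terms of $N$, this yields the second error term $|\xi|^{-N-1}\delta^{-4N}$. Finally, the estimate for $\nabla E(\xi)$ follows by running the same argument for $\nabla_\xi\widehat\chi_{\mathcal B}(\xi)=-2\pi i\,\widehat{x\,\chi_{\mathcal B}}(\xi)$, i.e.\ with the amplitude multiplied by the bounded factor $x$, and checking that the leading term agrees with $\nabla_\xi\bigl(a(\xi)|\xi|^{-(d+1)/2}\bigr)$ up to errors of the stated size; here one uses that $\partial_{\xi_j}x(\pm\xi)$ is tangent to $\partial\mathcal B$ at the pole, so $\partial_{\xi_j}\bigl(x(\pm\xi)\cdot\xi\bigr)=x_j(\pm\xi)$ is bounded, together with $|\nabla_\xi(K(x(\xi))^{-1/2})|\lesssim|\xi|^{-1}\delta^{-5/2}$, which is \eqref{partial K} with $\gamma=\tfrac12$ and $|\nu|=1$.

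The main obstacle is making stationary phase quantitative in the curvature. One needs a version of the stationary phase expansion whose remainder is fully explicit both in $|\det\nabla^2\psi|^{-1/2}$ and in $\|(\nabla^2\psi)^{-1}\|$ and in the number of amplitude derivatives, and one must apply it near an ``almost degenerate'' pole, where the Morse reduction is itself poorly controlled and forces the localization radius $\rho$ (and hence the derivative losses) to be linked to $\delta$. Balancing these so that the two error contributions collapse to \emph{exactly} $|\xi|^{-(d+3)/2}\delta^{-(d+5)/2}+|\xi|^{-N-1}\delta^{-4N}$ --- producing the precise exponents $(d+5)/2$ and $4N$ rather than merely some finite negative power of $\delta$ --- is where the real computation lies.
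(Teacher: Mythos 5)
Your approach --- divergence theorem to reduce $\widehat\chi_{\mathcal B}$ to a boundary oscillatory integral, nondegenerate stationary phase at $x(\pm\xi)$ with the Hessian determinant tracked through $K(x(\pm\xi))$, integration by parts off the poles, and then a repeat of the whole argument for $\nabla\widehat\chi_{\mathcal B}$ with the amplitude multiplied by $x$ --- is exactly the argument the paper relies on, except that the paper does not carry it out: it cites Guo (Corollary 4.3 and the proof of Theorem 4.2 in \cite{highdim}) both for \eqref{expansion of ft} with the bound on $|E(\xi)|$ and for the surface integral that appears after applying the divergence theorem to $\nabla\widehat\chi_{\mathcal B}$, and then gets \eqref{est nabla E} by differentiating the known expansion using $\nabla(x(\xi)\cdot\xi)=x(\xi)$ and \eqref{partial K} and matching against that surface integral. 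You correctly identify the curvature-quantitative stationary phase (producing exactly $\delta^{-(d+5)/2}$ and $\delta^{-4N}$) as the hard part; that is precisely what the cited results of \cite{highdim} supply, so your sketch is the right route and contains no wrong step, only the unexecuted bookkeeping you already acknowledge.
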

%%%%%%%%%%%%%%%%%%%%%%%%%%%%%%%%%%%%%%%%%%%%%%%%%%%%%%%%%%%%%%%%%%%%%%%%%%%%%%%%%%%%%%%%%
\begin{proof}
We only prove the estimate of $\nabla E(\xi)$, see Corollary 4.3 in Guo \cite[p. 423]{highdim} for the first part of this lemma. Combining \eqref{partial K}, \eqref{expansion of ft} and the fact $\nabla (x(\xi)\cdot \xi)=x(\xi)$ (see Corollary 1.7.3 in \cite[p. 47]{supportfunction}) yields
\begin{equation}
\begin{split}\label{nabla E}
\nabla E(\xi)&=\nabla\widehat{\chi}_{\mathcal{B}}(\xi)+2^{-1}(d+1)|\xi|^{-(d+5)/2}\xi a(\xi)
-|\xi|^{-(d+1)/2}\nabla a(\xi),\\
&=\nabla\widehat{\chi}_{\mathcal{B}}(\xi)-|\xi|^{-(d+1)/2}\nabla a(\xi)+O\left( |\xi|^{-(d+3)/2}\delta^{-1/2}\right)\\
&=\nabla\widehat{\chi}_{\mathcal{B}}(\xi)+|\xi|^{-(d+1)/2}e^{-2\pi ix(-\xi)\cdot\xi-\pi i(d-1)/4}x(\!-\xi)(K(x(-\xi)))^{-1/2}\\
&\quad-|\xi|^{-(d+1)/2}e^{-2\pi ix(\xi)\cdot\xi+\pi i(d-1)/4}x(\xi)(K(x(\xi)))^{-1/2}\\
&\quad+O\left(|\xi|^{-(d+3)/2}\delta^{-5/2}\right).
\end{split}
\end{equation}

By the divergence theorem we have
\begin{equation}
\begin{split}\label{nabla chiB}
\nabla\widehat{\chi}_{\mathcal{B}}(\xi)&=|\xi|^{-2}\int_{\partial{\mathcal{B}}}xe^{-2\pi i\xi\cdot x}\xi\cdot n(x)\,\mathrm d\sigma(x)-|\xi|^{-2}\xi\int_{\mathcal B}e^{-2\pi i\xi\cdot x}\,\mathrm dx\\
&=|\xi|^{-2}\int_{\partial{\mathcal{B}}}xe^{-2\pi i\xi\cdot x}\xi\cdot n(x)\,\mathrm d\sigma(x)+O\left(|\xi|^{-1}|\widehat{\chi}_{\mathcal{B}}(\xi)|\right)\\
&=|\xi|^{-(d+1)/2}e^{-2\pi ix(\xi)\cdot\xi+\pi i(d-1)/4}x(\xi)(K(x(\xi)))^{-1/2}\\
&\quad -|\xi|^{-(d+1)/2}e^{-2\pi ix(-\xi)\cdot\xi-\pi i(d-1)/4}x(-\xi)(K(x(-\xi)))^{-1/2}\\
&\quad +O\left(|\xi|^{-(d+3)/2}\delta^{-(d+5)/2}+|\xi|^{-N-1}\delta^{-4N}\right),
\end{split}
\end{equation}
where $\mathrm d\sigma$ is the induced surface measure on the boundary and the last equality follows from the proof of Theorem 4.2 in \cite[p. 422-423]{highdim}.
Combining \eqref{nabla E} and \eqref{nabla chiB} yields the desired bounds of $\nabla E(\xi)$. This finishes the proof.
\end{proof}

Notice that the symmetry of $\mathcal D$ implies $x(-\xi)\!=-x(\xi)$ and $K(x(\xi))=K(x(-\xi))$. Hence if $K(x(\xi))\ne 0$, applying Lemma \ref{asymp} yields
\begin{equation*}
\widehat{\chi}_{\mathcal{D}}(\xi)=a(\xi)|\xi|^{-(d+1)/2}+E(\xi),
\end{equation*}
where
\begin{equation}\label{a(xi)}
a(\xi)=\pi^{-1}(K(x(\xi)))^{-1/2}\sin(2\pi x(\xi)\cdot \xi-(d-1)\pi/4).
\end{equation}
Meanwhile, by the result in Svensson \cite[p. 19]{Svensson} we have
\begin{equation}\label{svensson}
\widehat{\chi}_{\mathcal{D}}(\xi)\lesssim |\xi|^{-(d+1)/2}(K(x(\xi)))^{-1/2}.
 \end{equation}

Let $T_{x}$ be the tangent plane of $\partial \mathcal D$ at $x$,
\begin{equation*}
\tilde{B}(x,\lambda)=\left\{y\in \partial \mathcal D: \dist\left(y, T_{x}\right) < \lambda\right\}
 \end{equation*}
 be a ``cap" cut off from $\partial\mathcal D$ by a plane parallel to $T_{x}$ at distance $\lambda$, and $\sigma \left(\tilde{B}(x,\lambda)\right)$ be the surface measure of $\tilde{B}(x,\lambda)$. Then

\begin{lemma}\label{lemmacap}
Let $1\leq q\le d$ be an integer. For the domain $\mathcal{D}$, given any nonzero $\xi\in \mathbb{R}^d$ with $|\xi_{q}|/|\xi|\ge\varepsilon_0$, we have
\begin{equation*}
\sigma \left(\tilde{B}\left(x(\xi),|\xi |^{-1}\right)\right)\lesssim \prod_{
i=1, i\ne q}^{d} \min\left\{|\xi|^{-\frac{1}{m_{q,i}\omega_i}},|\xi|^{-\frac{1}{2}}\left(\frac{|\xi_i|}{|\xi|}\right)^{-\frac{m_{q,i}\omega_i-2}
{2(m_{q,i}\omega_i-1)}}\right\},
\end{equation*}
where the implicit constant only depends on $\mathcal{D}$ and $\varepsilon_0$.
\end{lemma}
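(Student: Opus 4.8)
The plan is to trade the surface measure of the cap for the Lebesgue measure of a sublevel set of the second–order Taylor remainder of the defining polynomial $F$, and then to exploit the block–additive structure of $F$ to cut that sublevel set into an explicit coordinate box. Write $\bar x=x(\xi)$, $B_p=\{1+d_p,\dots,d_{p+1}\}$, $p_0=p(q)$, $S_p=\sum_{l\in B_p}\bar x_l^{\omega_l}$, and $F(x)=\sum_{p=0}^{n-1}W_p(x)$ with $W_p(x)=\bigl(\sum_{l\in B_p}x_l^{\omega_l}\bigr)^{m_{p+1}}$. First I would record the elementary consequences of $|\xi_q|/|\xi|\ge\varepsilon_0$ and $\xi\parallel\nabla F(\bar x)$, namely $|\nabla F(\bar x)|\asymp 1$, $S_{p_0}\asymp 1$, $|\bar x_q|\asymp 1$, the Gauss–map relation $|\xi_l|/|\xi|\asymp S_{p(l)}^{\,m_{p(l)+1}-1}|\bar x_l|^{\omega_l-1}$ for $1\le l\le d$, and the trivial inequality $S_{p(l)}\ge|\bar x_l|^{\omega_l}$ (all constants depending only on $\mathcal D$ and $\varepsilon_0$). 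For $y\in\partial\mathcal D$ one has $F(y)=F(\bar x)=1$, hence
\begin{equation*}
\dist(y,T_{\bar x})=\frac{\Delta(y)}{|\nabla F(\bar x)|}\asymp\Delta(y),\qquad \Delta(y):=F(y)-F(\bar x)-\langle\nabla F(\bar x),y-\bar x\rangle\ge 0 .
\end{equation*}
The decisive point is that $\Delta$ is block–additive, $\Delta=\sum_p\Delta_p$, where $\Delta_p$ is the corresponding remainder of $W_p$, depends only on $\{y_l:l\in B_p\}$, and is $\ge 0$ by convexity of $W_p$. Thus $\tilde B(\bar x,|\xi|^{-1})\subseteq\partial\mathcal D\cap\bigcap_p\{\Delta_p\lesssim|\xi|^{-1}\}$. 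Assuming $|\xi|$ large (otherwise the asserted bound is trivial), this cap sits in a small neighbourhood of $\bar x$ on which $\partial\mathcal D$ is a graph $x_q=f(\hat x_q)$ with $|\nabla f|\lesssim 1$; projecting onto $\{x_q=0\}$ and using that the sets $B_p\setminus\{q\}$ partition $\{1,\dots,d\}\setminus\{q\}$, one gets
\begin{equation*}
\sigma\bigl(\tilde B(\bar x,|\xi|^{-1})\bigr)\lesssim\prod_{p}\bigl|E_p\bigr|,\qquad E_p:=\{\Delta_p\lesssim|\xi|^{-1}\}\subseteq\mathbb R^{|B_p|}\ \ (p\ne p_0),
\end{equation*}
with $E_{p_0}$ replaced by the shadow of $\{\Delta_{p_0}\lesssim|\xi|^{-1}\}$ after deleting the $x_q$–coordinate.

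So the remaining task is to show, for each $p$, that $|E_p|\lesssim\prod_{l\in B_p\setminus\{q\}}w_l$, where $w_l=\min\{|\xi|^{-1/(m_{q,l}\omega_l)},\,|\xi|^{-1/2}(|\xi_l|/|\xi|)^{-(m_{q,l}\omega_l-2)/(2(m_{q,l}\omega_l-1))}\}$ is the $l$‑th factor in the statement. For this I would peel the outer power via the exact identity
\begin{equation*}
\Delta_p(y)=D_{m_{p+1}}\!\Bigl(\textstyle\sum_{l\in B_p}y_l^{\omega_l},\,S_p\Bigr)+m_{p+1}S_p^{\,m_{p+1}-1}\!\sum_{l\in B_p}\zeta_l,\qquad D_m(u,v):=u^m-v^m-mv^{m-1}(u-v),
\end{equation*}
where $\zeta_l:=y_l^{\omega_l}-\bar x_l^{\omega_l}-\omega_l\bar x_l^{\omega_l-1}(y_l-\bar x_l)$. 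Both summands on the right are nonnegative (convexity of $u\mapsto u^{m_{p+1}}$, resp. of $u\mapsto u^{\omega_l}$), so $E_p$ is contained in the set where, for every $l\in B_p$, simultaneously
\begin{equation*}
\zeta_l\lesssim|\xi|^{-1}S_p^{\,1-m_{p+1}}\qquad\text{and}\qquad D_{m_{p+1}}\!\Bigl(\textstyle\sum_{l'\in B_p}y_{l'}^{\omega_{l'}},\,S_p\Bigr)\lesssim|\xi|^{-1}.
\end{equation*}
Now $\zeta_l$ is a one–variable convex function of $y_l$ vanishing at $\bar x_l$ to order $2$ (to order $\omega_l$ if $\bar x_l=0$), so the first constraint alone forces $|y_l-\bar x_l|\lesssim\min\{(|\xi|^{-1}S_p^{1-m_{p+1}})^{1/\omega_l},\,(|\xi|^{-1}S_p^{1-m_{p+1}})^{1/2}|\bar x_l|^{-(\omega_l-2)/2}\}$; the second constraint is needed only in the extreme régime $S_p^{\,m_{p+1}}<|\xi|^{-1}$, where it gives $\sum_{l'\in B_p}y_{l'}^{\omega_{l'}}\lesssim|\xi|^{-1/m_{p+1}}$ and hence $|y_l|,|\bar x_l|\lesssim|\xi|^{-1/(m_{p+1}\omega_l)}$. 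Feeding in the Gauss–map relation and $S_p\ge|\bar x_l|^{\omega_l}$, and using that $m_{q,l}=m_{p+1}$ for $p\ne p_0$ while $m_{q,l}=1$ and $S_{p_0}\asymp 1$ for $p=p_0$, one converts each of these into $|y_l-\bar x_l|\lesssim w_l$. Taking products over $l$ and then over $p$ then gives the lemma.

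\textbf{The main obstacle} is precisely the last step: checking $|y_l-\bar x_l|\lesssim w_l$ case by case. One must split according to whether the constraint $\zeta_l\lesssim|\xi|^{-1}S_p^{1-m_{p+1}}$ places $y_l$ in the ``quadratic'' range $|y_l-\bar x_l|\lesssim|\bar x_l|$ or in the ``flat'' range, verify that the quadratic and flat estimates glue continuously into the single minimum $w_l$, and see that the Bregman term $D_{m_{p+1}}$ is exactly what rescues the flat range when the block sum $S_p$ is so small that $S_p^{\,m_{p+1}}<|\xi|^{-1}$ (the block $p=p_0$ being easier, since there $S_{p_0}\asymp 1$). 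Each subcase reduces to an elementary inequality between powers of $|\xi|$, $S_p$ and $|\bar x_l|$ whose validity comes down to $S_p\ge|\bar x_l|^{\omega_l}$, and carrying this out uniformly — including the degenerate cases $\bar x_l=0$ and $m_{p+1}=1$ — is the ``more computation'' referred to in the introduction. As a consistency check, when $\mathcal D$ is a super‑sphere \eqref{supersphere} there is a single block with $m_1=1$, the Bregman term is absent, and the estimate collapses to the classical cap bound $\prod_{l\ne q}\min\{|\xi|^{-1/\omega},|\xi|^{-1/2}(|\xi_l|/|\xi|)^{-(\omega-2)/(2(\omega-1))}\}$.
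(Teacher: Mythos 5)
The paper does not prove this lemma itself; it simply cites Lemma~2.1 of the companion paper \cite{model2} (Guo and Jiang, \emph{Colloq.\ Math.}\ 157, 2019). Your argument is therefore a genuinely self-contained alternative, and after checking the exponent arithmetic it appears to be correct. The structure is clean: writing $F=\sum_p W_p$ and using strict convexity, $\dist(y,T_{\bar x})\asymp\Delta(y)=\sum_p\Delta_p(y)$ with each Bregman remainder $\Delta_p\ge 0$ depending only on the block variables, so the cap sits in the product set $\prod_p\{\Delta_p\lesssim|\xi|^{-1}\}$; projecting out $x_q$ reduces the surface measure to a product of sublevel-set volumes. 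The chain-rule Bregman identity $\Delta_p=D_{m_{p+1}}(u,S_p)+m_{p+1}S_p^{m_{p+1}-1}\sum_l\zeta_l$ with both summands nonnegative is the right decomposition, and the two regimes you isolate are exactly correct: when $S_p\ge|\xi|^{-1/m_{p+1}}$, the one-variable sublevel-set estimate for $\zeta_l$ together with $|\xi_l|/|\xi|\asymp S_{p(l)}^{m_{p(l)+1}-1}|\bar x_l|^{\omega_l-1}$ and $S_p\ge|\bar x_l|^{\omega_l}$ yields $|y_l-\bar x_l|\lesssim w_l$ (I verified both inequalities reduce precisely to $S_p\ge|\bar x_l|^{\omega_l}$ after simplifying the exponents); when $S_p<|\xi|^{-1/m_{p+1}}$, the outer Bregman term $D_{m_{p+1}}$ forces $\sum_{l'}y_{l'}^{\omega_{l'}}\lesssim|\xi|^{-1/m_{p+1}}$, which gives the $|\xi|^{-1/(m_{q,l}\omega_l)}$ bound and, since then $|\xi_l|/|\xi|\lesssim|\xi|^{-(m\omega_l-1)/(m\omega_l)}$, that bound is indeed the minimum. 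The block $p=p(q)$ is handled by $S_{p(q)}\asymp 1$. Two small points you should make fully explicit in a writeup: (a) the reduction to $|\xi|$ large uses strict convexity of $F$ (each $x\mapsto x^{\omega_l}$ is strictly convex, so $\partial\mathcal D\cap T_{\bar x}=\{\bar x\}$, and a compactness argument gives uniformity of the cap shrinking); (b) when $m_{p+1}=1$ the Bregman term $D_{m_{p+1}}$ vanishes identically, but then $\eta=|\xi|^{-1}$ and the $\zeta_l$ constraint alone suffices, so the ``Case B'' rescue is never needed — worth stating so that nothing degenerates. With those caveats addressed, your route is a valid and arguably more transparent proof than simply referring the reader to \cite{model2}.
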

%%%%%%%%%%%%%%%%%%%%%%%%%%%%%%%%%%%%%%%%%%%%%%%%%%%%%%%%%%%%%%%%%%%%%%%%%%%%%%%%%%%
\begin{proof}
See  Lemma 2.1 in \cite[p. 70]{model2}.
\end{proof}

\begin{lemma}\label{A and B}
Let $1\leq j\le d-1$ be an integer. For the shell $\mathcal D(r,t)$, given any nonzero $n\in\mathbb{Z}^d$, we have
\begin{equation}\label{e2}
\begin{split}
\widehat{\chi}_{\mathcal D(r,t)}(n)\!\!\lesssim\!\!\left\{\begin{array}{ll}
\!\!r^{d-1}|n|^{-1}\sigma\left(\tilde{B}\left(x(n),(r|n|)^{-1}\right)\right)\min\{t|n|,1\}&\!\!\textrm{if $n\!\!\in\mathbb Z^d(j)$},\\
\!\!r^{(d-1)/2}|n|^{-(d+1)/2}(K(x(n)))^{-1/2}\min\{t|n|,1\}&\!\!\textrm{if $n\!\!\in\mathbb Z^d(d)$},
\end{array}
\right.
\end{split}
\end{equation}
where $\mathbb Z^d(j)$ and $\mathbb Z^d(d)$ are defined by \eqref{Zd(j)} and the implicit constants are independent of $r,t$ and $n$. Furthermore, if $n\in \mathbb Z^d(d)$, we have
\begin{equation*}
\widehat{\chi}_{\mathcal D(r,t)}(n)=A(r,t,n)+B(r,t,n)
\end{equation*}
with
\begin{equation}
\begin{split}\label{A(rtn)}
A(r,t,n)=&2\pi^{-1}r^{(d-1)/2}|n|^{-(d+1)/2}(K(x(n)))^{-1/2}  \\
&\cos(2\pi r x(n)\cdot n-\pi(d-1)/4)\sin(\pi tx(n)\cdot n)
\end{split}
\end{equation}
and
\begin{equation}
\begin{split}
B(r,t,n)\lesssim \min\Big\{&r^{(d-1)/2}|n|^{-(d+1)/2}(K(x(n)))^{-1/2}\min\{t|n|,1\}\label{e3},\\
 &r^{d-1-\tilde N}t|n|^{-\tilde N}(K(x(n)))^{-4\tilde N}\Big\}.
\end{split}
\end{equation}
Here $\tilde N$ denotes the integer part of $(d+1)/2$ for simplicity and the implicit constant is independent of $r,t$ and $n$.
\end{lemma}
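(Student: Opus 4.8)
The plan is to compute $\widehat{\chi}_{\mathcal D(r,t)}$ by writing $\chi_{\mathcal D(r,t)} = \chi_{(r+t/2)\mathcal D} - \chi_{(r-t/2)\mathcal D}$ and using the scaling relation $\widehat{\chi}_{s\mathcal D}(n) = s^d\,\widehat{\chi}_{\mathcal D}(sn)$. Thus
\begin{equation*}
\widehat{\chi}_{\mathcal D(r,t)}(n) = (r+t/2)^d\,\widehat{\chi}_{\mathcal D}((r+t/2)n) - (r-t/2)^d\,\widehat{\chi}_{\mathcal D}((r-t/2)n).
\end{equation*}
Since $n$ and $\pm(r\pm t/2)n$ point in the same direction, $x(\cdot)$ and $K(x(\cdot))$ are the same for all these arguments; write $x=x(n)$, $K=K(x(n))$. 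First I would dispose of the crude bound \eqref{e2}. For $n\in\mathbb Z^d(j)$, $1\le j\le d-1$, the curvature vanishes, so instead of the stationary phase expansion one invokes the cap estimate: the standard bound $|\widehat{\chi}_{\mathcal B}(\xi)|\lesssim \sigma(\tilde B(x(\xi),|\xi|^{-1}))$ for a convex body, applied to $\mathcal B = (r\pm t/2)\mathcal D$ and rescaled, gives the $r^{d-1}|n|^{-1}\sigma(\tilde B(x(n),(r|n|)^{-1}))$ factor (the $|n|^{-1}$ coming from an integration by parts in the radial direction that also supplies the $\min\{t|n|,1\}$ gain when $t|n|$ is small — for $t|n|\ge 1$ one simply bounds the difference of the two terms by twice the larger, for $t|n|<1$ one uses the mean value theorem in the dilation parameter). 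For $n\in\mathbb Z^d(d)$ the curvature is nonzero and \eqref{svensson} (rescaled) plus the same $\min\{t|n|,1\}$ device gives the second line of \eqref{e2}.

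For the refined statement when $n\in\mathbb Z^d(d)$, I would substitute the asymptotic expansion \eqref{a(xi)}: $\widehat{\chi}_{\mathcal D}(\xi) = \pi^{-1}(K)^{-1/2}|\xi|^{-(d+1)/2}\sin(2\pi x\cdot\xi - (d-1)\pi/4) + E(\xi)$. Plugging $\xi = (r\pm t/2)n$ into the main term, the prefactor $(r\pm t/2)^d\cdot(r\pm t/2)^{-(d+1)/2} = (r\pm t/2)^{(d-1)/2}$; up to an error of size $t\,r^{(d-3)/2}|n|^{-(d+1)/2}(K)^{-1/2}$ (absorbed into $B$, since $t\le Cr^{-\alpha}$) this is $r^{(d-1)/2}$. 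The difference of the two sine terms,
\begin{equation*}
\sin\!\big(2\pi (r+t/2)x\cdot n - \tfrac{(d-1)\pi}{4}\big) - \sin\!\big(2\pi (r-t/2)x\cdot n - \tfrac{(d-1)\pi}{4}\big),
\end{equation*}
collapses by the sum-to-product identity to $2\cos(2\pi r x\cdot n - (d-1)\pi/4)\sin(\pi t x\cdot n)$, which is exactly $A(r,t,n)$ after collecting constants. Everything left over — the error terms $E((r\pm t/2)n)$, the $(r\pm t/2)^{(d-1)/2}$ versus $r^{(d-1)/2}$ discrepancy, and the difference coming from $K$ being evaluated at slightly different-length vectors (harmless since $K$ depends only on direction) — gets bundled into $B(r,t,n)$.

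It remains to verify the two-sided bound on $B$. The first bound in \eqref{e3} is immediate: $B = \widehat{\chi}_{\mathcal D(r,t)}(n) - A$, both summands are dominated by $r^{(d-1)/2}|n|^{-(d+1)/2}(K)^{-1/2}\min\{t|n|,1\}$ (for $A$ this is visible from $|\sin(\pi t x\cdot n)|\le \min\{\pi t|x\cdot n|,1\}\lesssim\min\{t|n|,1\}$ using $|x|\asymp 1$), so their difference is too. The second, more delicate bound $r^{d-1-\tilde N}t|n|^{-\tilde N}(K)^{-4\tilde N}$ is where the real work lies and is the main obstacle: it requires integrating by parts $\tilde N$ times in the surface integral defining $\widehat{\chi}_{\mathcal D(r,t)}(n)$ — equivalently, applying the $\nabla E$-type estimate \eqref{est nabla E} of Lemma \ref{asymp} iteratively, together with the derivative bounds \eqref{partial x(xi)} and \eqref{partial K} for $x(\xi)$ and $(K(x(\xi)))^{-\gamma}$ — and then extracting the extra factor of $t$ from the difference structure (the $\sin(\pi t x\cdot n)$, or equivalently one more integration by parts across the thin shell of width $t$). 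The bookkeeping of the $(K)^{-4\tilde N}$ power through $\tilde N$ derivatives, each of which by \eqref{partial K} roughly squares the curvature exponent, is the technical heart; one must be careful that the $N$ in Lemma \ref{asymp} is chosen $\ge\tilde N$ so that the $|\xi|^{-N-1}\delta^{-4N}$ tail is negligible, and that the power of $r$ comes out as $d-1-\tilde N$ after the rescaling $\xi=(r\pm t/2)n$ (each derivative in $\xi$ costs $|\xi|^{-1}\asymp (r|n|)^{-1}$, i.e. a factor $r^{-1}$ relative to the leading $r^{d-1}$). I would organize this as a single estimate: bound $\widehat{\chi}_{\mathcal D(r,t)}(n)$ directly by $\tilde N$-fold integration by parts and observe it then also dominates $B$ (since $A$ satisfies the same bound trivially, $|A|\lesssim r^{(d-1)/2}|n|^{-(d+1)/2}(K)^{-1/2}\cdot t|n|$ and $(K)^{-1/2}\le (K)^{-4\tilde N}$, $r^{(d-1)/2}\le r^{d-1-\tilde N}$ when $\tilde N\le (d-1)/2$).
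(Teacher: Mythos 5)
Your derivation of \eqref{e2}, the splitting into $A+B$ via the asymptotic expansion, the sum-to-product simplification of the main term, and the first bound in \eqref{e3} all match the paper's argument in substance (the paper derives the $\min\{t|n|,1\}$ factor via the explicit decomposition \eqref{splitD(r,t,n)} and a bound on $\nabla\widehat\chi_{\mathcal D}$ rather than a mean-value gloss, but that is the same device). The gap is in your treatment of the second, ``delicate'' bound $r^{d-1-\tilde N}t|n|^{-\tilde N}(K)^{-4\tilde N}$ on $B$, which you acknowledge as the main obstacle and then propose to ``organize as a single estimate: bound $\widehat{\chi}_{\mathcal D(r,t)}(n)$ directly by $\tilde N$-fold integration by parts and observe it then also dominates $B$, since $A$ satisfies the same bound trivially.'' That route is not viable. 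First, $\widehat\chi_{\mathcal D(r,t)}(n)$ itself does \emph{not} satisfy a bound of the form $r^{d-1-\tilde N}t|n|^{-\tilde N}(\cdots)$: the stationary-phase main term is precisely $A$, which decays only like $r^{(d-1)/2}|n|^{-(d+1)/2}\min\{t|n|,1\}$, so no amount of integration by parts can push the whole Fourier transform past $|n|^{-(d+1)/2}$ decay. Second, the arithmetic you offer to salvage this --- ``$r^{(d-1)/2}\le r^{d-1-\tilde N}$ when $\tilde N\le (d-1)/2$'' --- is vacuous, because $\tilde N=\lfloor(d+1)/2\rfloor>(d-1)/2$ for every $d\ge 3$; similarly $|n|^{-(d+1)/2}\cdot|n|$ is not $\lesssim|n|^{-\tilde N}$. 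So $A$ does \emph{not} satisfy the second bound, and one cannot deduce it for $B$ by subtraction.

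The correct mechanism, which the paper uses, is to bound each of the four explicit terms in the formula for $B(r,t,n)$ (your \eqref{B(rtn)}) directly: the two terms involving $(r\pm t/2)^{(d-1)/2}-r^{(d-1)/2}$ and the term with $(r+t/2)^d-(r-t/2)^d$ each already carry a factor $\lesssim r^{(d-3)/2}t$ or $r^{d-1}t$, and for the remaining term $(r-t/2)^d\bigl(E((r+t/2)n)-E((r-t/2)n)\bigr)$ a \emph{single} application of the mean value theorem together with the gradient estimate \eqref{est nabla E} (with $N=\tilde N$) produces $r^{d-1-\tilde N}t|n|^{-\tilde N}(K)^{-4\tilde N}$. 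No iterated integration by parts or bookkeeping of curvature powers through $\tilde N$ derivatives is needed at this stage --- that work is already packaged inside Lemma \ref{asymp}. In short: estimate $B$ term by term, not $\widehat\chi_{\mathcal D(r,t)}-A$ globally.
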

%%%%%%%%%%%%%%%%%%%%%%%%%%%%%%%%%%%%%%%%%%%%%%%%%%%%%%%%%%%%%%%%%%%%%%%%%%%%%%%%%%%%%%%%
\begin{proof}
Since
\begin{equation}\label{chiD(r,t)}
\widehat{\chi}_{\mathcal D(r,t)}(n)=(r+t/2)^d\widehat{\chi}_{\mathcal{D}}((r+t/2)n)-(r-t/2)^d\widehat{\chi}_{\mathcal{D}}((r-t/2)n),
 \end{equation}
 combining the divergence theorem, Theorem B in \cite[p. 335]{BNW} and \eqref{svensson} yields
\begin{equation}\label{e0}
\widehat{\chi}_{\mathcal D(r,t)}(n)\lesssim \left\{\begin{array}{ll}
r^{d-1}|n|^{-1}\sigma\left(\tilde{B}\left(x(n),(r|n|)^{-1}\right)\right)&\textrm{if $n\in\mathbb Z^d(j)$}, \\
r^{(d-1)/2}|n|^{-(d+1)/2}(K(x(n)))^{-1/2}&\textrm{if $n\in\mathbb Z^d(d)$},
\end{array}
\right.
\end{equation}
where $1\le j\le d-1$. On the other hand,
\begin{equation}\label{splitD(r,t,n)}
\begin{split}
\widehat{\chi}_{\mathcal D(r,t)}(n)=\left((r+t/2)^d-r^d\right)\widehat{\chi}_{\mathcal{D}}((r+t/2)n)+r^d\big(\widehat{\chi}_{\mathcal{D}}((r+t/2)n)\\
-\widehat{\chi}_{\mathcal{D}}((r-t/2)n)\big)+\left(r^d-(r-t/2)^d\right)\widehat{\chi}_{\mathcal{D}}((r-t/2)n).
\end{split}
\end{equation}
Note that $\nabla\widehat{\chi}_{\mathcal{D}}(n)=-2\pi i\int_{\mathcal{D}}xe^{-2\pi in\cdot x}\,\mathrm dx$, we can handle this integral like $\widehat\chi_{\mathcal D}(n)$ to get
\begin{equation}\label{splitD(r,t,n)2}
\nabla\widehat{\chi}_{\mathcal{D}}(n)\lesssim \left\{\begin{array}{ll}
|n|^{-1}\sigma\left(\tilde{B}(x(n),|n|^{-1})\right)&\textrm{if $n\in\mathbb Z^d(j)$},\\
|n|^{-(d+1)/2}(K(x(n)))^{-1/2}&\textrm{if $n\in\mathbb Z^d(d)$}.
\end{array}
\right.
\end{equation}
Combining \eqref{splitD(r,t,n)} and \eqref{splitD(r,t,n)2} yields
\begin{equation}\label{e1}
\widehat{\chi}_{\mathcal D(r,t)}(n)\lesssim \left\{\begin{array}{ll}
r^{d-1}|n|^{-1}\sigma\left(\tilde{B}\left(x(n),(r|n|)^{-1}\right)\right)t|n|&\textrm{if $n\in\mathbb Z^d(j)$},\\
r^{(d-1)/2}|n|^{-(d+1)/2}(K(x(n)))^{-1/2}t|n|&\textrm{if $n\in\mathbb Z^d(d)$}.
\end{array}
\right.
\end{equation}
Then \eqref{e0} and \eqref{e1} enable us to get \eqref{e2}.

Furthermore, if $n\in \mathbb Z^d(d)$, by using \eqref{expansion of ft} and \eqref{chiD(r,t)} we rewrite
\begin{equation*}
\widehat{\chi}_{\mathcal D(r,t)}(n)=A(r,t,n)+B(r,t,n)
\end{equation*}
with
\begin{equation*}
\begin{split}
A(r,t,n)&=r^{(d-1)/2}|n|^{-(d+1)/2}(a((r+t/2)n)-a((r-t/2)n))
\end{split}
\end{equation*}
and
\begin{equation}\label{B(rtn)}
\begin{split}
B(r,t,n)=&\left((r+t/2)^{(d-1)/2}-r^{(d-1)/2}\right)|n|^{-(d+1)/2}a((r+t/2)n)\\
&+\left(r^{(d-1)/2}-(r-t/2)^{(d-1)/2}\right)|n|^{-(d+1)/2}a((r-t/2)n)\\
&+\left((r+t/2)^{d}-(r-t/2)^{d}\right)E((r+t/2)n)\\
&+(r-t/2)^{d}(E((r+t/2)n)-E((r-t/2)n)).
\end{split}
\end{equation}
It follows from \eqref{a(xi)} that
\begin{equation*}
\begin{split}
A(r,t,n)=&2\pi^{-1}r^{(d-1)/2}|n|^{-(d+1)/2}(K(x(n)))^{-1/2}  \\
&\cos(2\pi r x(n)\cdot n-\pi(d-1)/4)\sin(\pi tx(n)\cdot n).
\end{split}
\end{equation*}
Since $x(n)\cdot n\asymp |n|$, we have
 \begin{equation*}
 A(r,t,n)\lesssim r^{(d-1)/2}|n|^{-(d+1)/2}(K(x(n)))^{-1/2}\min\{t|n|,1\}.
 \end{equation*}
 Combining this inequality with \eqref{e2} gives the first bound in \eqref{e3}.
Let $\tilde N$ be the integer part of $(d+1)/2$. Then by \eqref{est nabla E},
 \begin{equation*}
 (r-t/2)^{d}(E((r+t/2)n)-E((r-t/2)n))\lesssim r^{d-1-\tilde N}t|n|^{-\tilde N}(K(x(n)))^{-4\tilde N}.
 \end{equation*}
Note that the first three terms on the right side of \eqref{B(rtn)} are also bounded by $O\left(r^{d-1-\tilde N}t|n|^{-\tilde N}(K(x(n)))^{-4\tilde N}\right)$, so we have the second bound in \eqref{e3}. This finishes the proof.
\end{proof}

\begin{lemma}\label{two}
 Given any $\epsilon>0$, assume that $\psi(\xi)$ is a smooth function supported in $\cap_{i=1}^d\{\xi\in\mathbb R^d:\epsilon\le |\xi|\le 1/\epsilon, |\xi_i|/|\xi|\ge \epsilon\}$ and satisfies that for any multi-index $\nu$ with $|\nu|=j$, there is a $\gamma(j)\in \mathbb{N}$ and a constant $C_j$ such that
\begin{equation*}
\begin{split}
\left|\frac{\partial^{|\nu|}\psi(\xi)}{\partial \xi^{\nu}}\right|\le C_j\epsilon^{-\gamma(j)}.
\end{split}
\end{equation*}
Then for the domain $\mathcal D$, given any $k\in\mathbb{N},\zeta\in \mathbb R^d$ and $\lambda>0$, there is a $\mathbf{\gamma}_k\in\mathbb{N}$ such that
 \begin{equation}\label{oscillatory int}
 \begin{split}
&\int_{\mathbb R^d}\psi(\xi)e^{2\pi i \lambda(2x(\xi)\cdot \xi-\zeta\cdot \xi)}\,\mathrm d\xi\\
&\lesssim \epsilon^{-\gamma_k}\min\left\{(\lambda|\zeta|)^{-(d-1)/2},(\lambda\,\mathrm{dist}\{\zeta,\partial(2\mathcal D)\})^{-k}\right\},
\end{split}
\end{equation}
where the implicit constant is independent of $\lambda$ and $\zeta$.
\end{lemma}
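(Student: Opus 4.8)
The plan is to treat the two bounds inside the minimum separately, since they govern complementary regimes. First, for the bound $(\lambda|\zeta|)^{-(d-1)/2}$: the phase is $\Phi(\xi)=2x(\xi)\cdot\xi-\zeta\cdot\xi$, and by Corollary 1.7.3 of \cite{supportfunction} we have $\nabla_\xi(x(\xi)\cdot\xi)=x(\xi)$, so $\nabla\Phi(\xi)=2x(\xi)-\zeta$. Thus critical points occur exactly where $\zeta=2x(\xi)$, i.e.\ $\zeta\in\partial(2\mathcal D)$, and on the support of $\psi$ (where $|\xi_i|/|\xi|\ge\epsilon$ for all $i$) the corresponding boundary point $x(\xi)$ lies in a region of $\partial\mathcal D$ with Gaussian curvature bounded below by a constant depending only on $\epsilon$ and $\mathcal D$ (by Lemma \ref{relation}). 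Since $2\mathcal D$ is convex with nonvanishing curvature there, the Hessian $\nabla^2\Phi(\xi)=2\,\partial x(\xi)/\partial\xi$ is nonsingular with determinant controlled by $|\xi|^{-d}(K(x(\xi)))^{-\cdots}$ via \eqref{inverse of Gauss map}. When $\mathrm{dist}(\zeta,\partial(2\mathcal D))$ is small one applies stationary phase in the variable $\xi$ after localizing near the unique critical point, gaining a factor $(\lambda)^{-d/2}\cdot(\lambda)^{1/2}$ — wait, more precisely, the standard stationary phase asymptotic on $\mathbb R^d$ gives decay $\lambda^{-d/2}$, but one must remember that the effective number of oscillating variables is $d$; however the claimed exponent is $(d-1)/2$, which matches the fact that we only need the estimate uniformly and the decay $\lambda^{-(d-1)/2}$ is weaker than $\lambda^{-d/2}$, so it suffices. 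When $\zeta$ is bounded away from $\partial(2\mathcal D)$ (but $|\zeta|$ bounded, which can fail) one uses nonstationary phase — see below. Since $\psi$ is supported in $|\zeta|\lesssim 1/\epsilon$ anyway... actually $\zeta$ is \emph{not} restricted; so the first bound genuinely requires handling large $|\zeta|$, where $|\nabla\Phi|=|2x(\xi)-\zeta|\gtrsim|\zeta|$ and repeated integration by parts in $\xi$ gives decay $(\lambda|\zeta|)^{-k}$ for any $k$, certainly beating $(\lambda|\zeta|)^{-(d-1)/2}$. So the first bound reduces to: (a) $|\zeta|$ large — nonstationary phase; (b) $|\zeta|\asymp 1$ and $\zeta$ near $\partial(2\mathcal D)$ — stationary phase; (c) $|\zeta|\asymp 1$ and $\zeta$ away from $\partial(2\mathcal D)$ — here $\nabla\Phi$ is bounded below so again nonstationary phase gives rapid decay, in particular the trivial bound $O(1)\lesssim(\lambda|\zeta|)^{-(d-1)/2}$ holds once $\lambda|\zeta|\lesssim 1$ too.

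Second, for the bound $(\lambda\,\mathrm{dist}(\zeta,\partial(2\mathcal D)))^{-k}$: here the key observation is that although $\nabla\Phi(\xi)=2x(\xi)-\zeta$ need not be bounded below by $\mathrm{dist}(\zeta,\partial(2\mathcal D))$ pointwise on all of $\mathrm{supp}\,\psi$, it \emph{is} when $\zeta$ is outside $2\mathcal D$, since then $|2x(\xi)-\zeta|\ge \mathrm{dist}(\zeta,2\mathcal D)=\mathrm{dist}(\zeta,\partial(2\mathcal D))$ for every boundary point $x(\xi)$. When $\zeta$ is inside $2\mathcal D$, convexity gives $|2x(\xi)-\zeta|\gtrsim\mathrm{dist}(\zeta,\partial(2\mathcal D))$ as well, because $2\mathcal D$ is bounded and $\zeta$ interior forces every boundary point to be at distance at least $\mathrm{dist}(\zeta,\partial(2\mathcal D))$ from $\zeta$. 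Hence on all of $\mathrm{supp}\,\psi$ we have $|\nabla\Phi(\xi)|\gtrsim\mathrm{dist}(\zeta,\partial(2\mathcal D))$, and we may integrate by parts $k$ times. At each step the differential operator $L=(2\pi i\lambda|\nabla\Phi|^2)^{-1}\nabla\Phi\cdot\nabla$ satisfies $L^*$ applied to $\psi$ producing factors involving $\psi$'s derivatives (bounded by $\epsilon^{-\gamma(j)}$), derivatives of $x(\xi)$ (bounded via \eqref{partial x(xi)} by negative powers of $|\xi|$ and $K(x(\xi))$, both controlled by $\epsilon$ on $\mathrm{supp}\,\psi$), and negative powers of $|\nabla\Phi|$, i.e.\ of $\mathrm{dist}(\zeta,\partial(2\mathcal D))$. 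After $k$ integrations by parts one obtains $\lesssim_k \epsilon^{-\gamma_k}(\lambda\,\mathrm{dist}(\zeta,\partial(2\mathcal D)))^{-k}$, with $\gamma_k$ accumulating from the curvature and derivative bounds. Combining with the first bound and taking the minimum completes the proof.

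The main obstacle I anticipate is the bookkeeping in the stationary-phase regime (b): one must verify that after localizing to a small neighborhood of the critical point $\xi_0$ with $2x(\xi_0)=\zeta$, the cutoff produced by this localization still satisfies derivative bounds of the form $C_j\epsilon^{-\gamma(j)}$ so that the constant $\gamma_k$ depends only on $\epsilon$, $k$, and $\mathcal D$ and not on $\zeta$ or $\lambda$; and that the Hessian determinant, which involves $(K(x(\xi_0)))^{-1}$ through \eqref{inverse of Gauss map}, is bounded above and below uniformly on $\mathrm{supp}\,\psi$ — this is exactly where Lemma \ref{relation} and the constraint $|\xi_i|/|\xi|\ge\epsilon$ are used, giving $K(x(\xi))\asymp_\epsilon 1$. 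A secondary subtlety is matching the exponent: the honest stationary-phase gain is $\lambda^{-d/2}$, which is stronger than the claimed $(d-1)/2$, so one simply discards the surplus; but one should note that the estimate is claimed only as an upper bound, so no sharpness is lost. I would also remark that the proof is essentially a quantitative refinement of the standard principle that the Fourier transform of a smooth surface-carried density decays like the reciprocal of (frequency)$^{(d-1)/2}$ away from the wave front and rapidly in the "elliptic" region $\mathrm{dist}(\zeta,\partial(2\mathcal D))>0$; the novelty here is purely the uniformity in the curvature-degenerate directions, handled by confining everything to $\mathrm{supp}\,\psi$.
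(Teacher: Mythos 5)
Your treatment of the second bound (integration by parts using $|\nabla\Phi(\xi)|=|2x(\xi)-\zeta|\ge\mathrm{dist}(\zeta,\partial(2\mathcal D))$, with derivative losses controlled by $\epsilon^{-\gamma}$ on $\mathrm{supp}\,\psi$ via Lemma \ref{relation} and \eqref{partial x(xi)}) is correct and coincides with the paper's argument.

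Your treatment of the first bound $(\lambda|\zeta|)^{-(d-1)/2}$ has a genuine gap. You propose to run $d$-dimensional stationary phase near the critical point of $\Phi(\xi)=2x(\xi)\cdot\xi-\zeta\cdot\xi$, and you suggest that the honest gain is $\lambda^{-d/2}$, which you then discard to get $\lambda^{-(d-1)/2}$. This cannot be made to work as stated, for two reasons. First, the critical \emph{set} is not a point: $x(\xi)$ is homogeneous of degree $0$ in $\xi$, so $\nabla\Phi=2x(\xi)-\zeta$ vanishes on a whole ray $\{\xi:x(\xi)=\zeta/2\}$ whenever $\zeta/2\in\partial\mathcal D$. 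Second, and more to the point, the Hessian $\nabla^2\Phi=2\,\partial x(\xi)/\partial\xi$ is always singular: by Euler's relation for the degree-$0$ homogeneous map $x(\cdot)$ one has $(\partial x/\partial\xi)\,\xi=0$, so $\xi$ lies in the kernel. Thus a nondegenerate $d$-dimensional stationary phase estimate $\lambda^{-d/2}$ is simply unavailable, and $\lambda^{-(d-1)/2}$ is the true exponent, not a weakening of something stronger — you have to extract it from a rank-$(d-1)$ Hessian, which requires an argument you have not supplied.

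The paper handles precisely this degeneracy by separating out the homogeneous direction. Using the coarea formula (Theorem 3.13 in \cite{coarea}), it rewrites the integral as
\[
\int_0^{\infty} e^{4\pi i\lambda h}\int_{x(\xi)\cdot\xi=h}\frac{\psi(\xi)e^{-2\pi i\lambda\zeta\cdot\xi}}{|x(\xi)|}\,\mathrm{dH}_{d-1}(\xi)\,\mathrm dh,
\]
so that for each fixed $h$ (which ranges over a bounded interval $[\epsilon,1/\epsilon]$ on $\mathrm{supp}\,\psi$) the inner integral is the Fourier transform, at frequency $\lambda\zeta$, of a smooth density on the level hypersurface $\{x(\xi)\cdot\xi=h\}$. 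The paper then computes (Appendix 3) that this hypersurface has Gaussian curvature $\widetilde K(\xi)\asymp h^{-(d-1)}(K(x(\xi)))^{-1}$, which is bounded below on $\mathrm{supp}\,\psi$ by a power of $\epsilon$. The standard nonvanishing-curvature surface FT estimate then yields $(\lambda|\zeta|)^{-(d-1)/2}$ when the hypersurface has a point with normal $\pm\zeta$, and rapid decay $(\lambda|\zeta|)^{-k}$ via nonstationary phase otherwise; integrating in $h$ costs only $\epsilon^{-1}$. This is the ingredient your proposal is missing: you need to make the curvature of the \emph{sliced} hypersurface $\{x(\xi)\cdot\xi=h\}$ appear, since the full $d$-dimensional Hessian of the phase is degenerate by homogeneity.
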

%%%%%%%%%%%%%%%%%%%%%%%%%%%%%%%%%%%%%%%%%%%%%%%%%%%%%%%%%%%%%%%%%%%%%%%%%%%%%%%%%%%%%%%%%%%%
\begin{proof}
Since
\begin{equation*}
|\nabla(2x(\xi)\cdot \xi-\zeta\cdot \xi)|=|2x(\xi)-\zeta|\ge\mathrm{dist\{\zeta,2\partial\mathcal D\}},
\end{equation*}
by integration by parts there is an integer $\gamma_1(k)>0$ such that
\begin{equation}\label{two1}
\left|\int_{\mathbb R^d}\psi(\xi)e^{2\pi i \lambda(2x(\xi)\cdot \xi-\zeta\cdot \xi)}\,\mathrm d\xi\right|\lesssim \epsilon^{-\gamma_1(k)}(\lambda\,\mathrm{dist}\{\zeta,2\partial\mathcal D\})^{-k}.
\end{equation}

On the other hand,
\begin{equation*}
\begin{split}
\int_{\mathbb R^d}\psi(\xi)e^{2\pi i \lambda(2x(\xi)\cdot \xi-\zeta\cdot \xi)}\,\mathrm d\xi
\!&=\!\!\int_0^{+\infty}\!\!e^{4\pi i \lambda h}\!\!\int_{x(\xi)\cdot \xi=h}\!\frac{\psi(\xi)e^{-2\pi i \lambda\zeta\cdot \xi}}{|x(\xi)|}\,\mathrm {dH}_{d-1}(\xi)\,\mathrm dh\\
&\lesssim\int_{\epsilon}^{1/\epsilon}\left|\int_{x(\xi)\cdot \xi=h}\frac{\psi(\xi)e^{-2\pi i \lambda\zeta\cdot \xi}}{|x(\xi)|}
\,\mathrm {dH}_{d-1}(\xi)\right|\,\mathrm dh,
\end{split}
\end{equation*}
where $\text H_{d-1}$ is the $(d-1)$-dimensional Hausdorff measure and Theorem 3.13 in \cite[p. 140]{coarea} gives the first equality above since $f(\xi):=x(\xi)\cdot \xi$ is Lipschitz continuous and the Jacobian $|Df(\xi)|=|x(\xi)|>0$. The support of $\psi$ implies $\epsilon\lesssim h\lesssim 1/\epsilon $ and $K(x(\xi))\gtrsim \epsilon^{d-1}$ by Lemma \ref{relation}. Therefore given any multi-index $\nu$ with $|\nu|=j$, by Lemma \ref{lemma1} there is an integer $\gamma_2(j)>0$ such that
\begin{equation*}
\begin{split}
\frac{\partial^{|\nu|}\left(\psi(\xi)|x(\xi)|^{-1}\right)}{\partial \xi^{\nu}}
\lesssim \epsilon^{-\gamma_2(j)}.
\end{split}
\end{equation*}
Let $\widetilde K(\xi)$ denote the Gaussian curvature of hypersurface $x(\xi)\cdot \xi=h$ at the point $\xi$. If $K(x(\xi))\ne 0$, we have
\begin{equation*}
\widetilde K(\xi)\asymp h^{-(d-1)}(K(x(\xi)))^{-1}\footnote{ We put the calculation of $\widetilde K(\xi)$ in Appendix \text{3}.}.
 \end{equation*}

If there is a $\xi_0\in \mathrm{supp}\psi\cap\{\xi\in \mathbb R^d\setminus \{\vec 0\}:x(\xi)\cdot\xi=h\}$ such that the outer normal of the hypersurface $x(\xi)\cdot \xi=h$ at $\xi_0$ is parallel to $\pm\zeta$, then there is an integer $\tilde \gamma_3>0$ such that
\begin{equation}\label{two2}
\begin{split}
\int_{x(\xi)\cdot \xi=h}\frac{\psi(\xi)e^{-2\pi i \lambda\zeta\cdot \xi}}{|x(\xi)|}\,\mathrm {dH}_{d-1}(\xi)
&\lesssim \epsilon^{-\tilde \gamma_3} \left(\widetilde K(\xi_0)\right)^{-\frac{1}{2}}(\lambda|\zeta|)^{-\frac{d-1}{2}}\\
&\lesssim \epsilon^{-\tilde \gamma_3} h^{(d-1)/2}(\lambda|\zeta|)^{-(d-1)/2},
\end{split}
\end{equation}
where the implicit constant is independent of $\lambda$ and $\zeta$.
If there is no such a point $\xi_0$, applying integration by parts gives that for any $k\in \mathbb N$, there is an integer  $\gamma_4(k)>0$ such that
\begin{equation}\label{two3}
\int_{x(\xi)\cdot \xi=h}\psi(\xi)|x(\xi)|^{-1}e^{-2\pi i \lambda\zeta\cdot \xi}\,\mathrm {dH}_{n-1}(\xi)\lesssim \epsilon^{-\gamma_4(k)}(\lambda|\zeta|)^{-k},
\end{equation}
where the implicit constant is independent of $\lambda$ and $\zeta$. Thus \eqref{oscillatory int} follows from \eqref{two1}, \eqref{two2} and \eqref{two3}.
\end{proof}

%%%%%%%%%%%%%%%%%%%%%%%%%%%%%%%%%%%%%%%%%%%%%%%%%%%%%%%%%%%%%%%%%%%%%%%%%%%%%%%%%%%%%%%%%%%%%%%%%%%%%%%%%%%%%%%%%%%%%%%%%%%%%%%%%%%%%%%%%%%%%%%%%%%%%%%%%%%%%%%%%

%%%%%%%%%%%%%%%%%%%%%%%%%%%%%%%%%%%%%%%%%%%%%%%%%%%%%%%%%%%%%%%%%%%%%%%%%%%%%%%%%%%%%%%%%%%%%%%%%%%%%%%%%%%%%%%%%%%%%%%%%%%%%%%%%%%%%%%%%%%%%%%%%%%%%%%%%%%%%%%%%%

\section{Proof of theorem \ref{variance}}
As we said in the introduction that this paper extends the result in \cite{CGG}, the structure and many ideas in this part follow that paper.

By Parseval's identity or Lemma 2.1 in \cite{CGG} we have
\begin{equation}
\begin{split}\label{variance2}
\int_{\mathbb{T}^d}\left|N_{\mathcal D(r,t)}(u)-\vol(\mathcal D(r,t))\right|^2\,\mathrm du
&=\sum _{n\in\mathbb{Z}^d\setminus\{0\}}|\widehat{\chi}_{\mathcal D(r,t)}(n)|^2\\
&=\sum _{j=1}^{d}\sum _{n\in\mathbb{Z}^d(j)}|\widehat{\chi}_{\mathcal D(r,t)}(n)|^2,
\end{split}
\end{equation}
where $\mathbb{Z}^d(j)$ is defined by \eqref{Zd(j)}.

\begin{lemma} \label{lemmaZd(j)}
If $1< r<\infty$, $0<t\le Cr^{-\alpha}$ for some constant $C>0$ and $\alpha>\max_{1\le i\le d-1}\{\alpha_i\}$ with $\alpha_i$ defined by \eqref{alphaj}, then given any $1\le j\le d-1$, there is a constant $\beta_j>0$ such that
\begin{equation}\label{summation over Zd(j)}
\sum _{n\in\mathbb{Z}^d(j)}|\widehat{\chi}_{\mathcal D(r,t)}(n)|^2\lesssim r^{d-1}t^{1+\beta_j},
\end{equation}
where the implicit constant is independent of $r$ and $t$.
\end{lemma}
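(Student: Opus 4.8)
The plan is to bound the contribution of the degenerate frequencies $n\in\mathbb Z^d(j)$ for each fixed $1\le j\le d-1$ by using the first line of \eqref{e2} in Lemma~\ref{A and B}, namely
\[
\widehat{\chi}_{\mathcal D(r,t)}(n)\lesssim r^{d-1}|n|^{-1}\sigma\!\left(\tilde B\bigl(x(n),(r|n|)^{-1}\bigr)\right)\min\{t|n|,1\},
\]
and then feeding in the cap estimate of Lemma~\ref{lemmacap} together with the lower bound for $K(x(n))$ from Lemma~\ref{relation}. Concretely, for $n\in\mathbb Z^d(j)$ there is a subset $S\in P_j\{1,\dots,d\}$ with $n_i\ne 0$ exactly for $i\in S$, and a coordinate $q\in S$ with $|n_q|/|n|\ge\varepsilon_0$; applying Lemma~\ref{lemmacap} at the rescaled frequency $r|n|$ in place of $|\xi|$ gives
\[
\sigma\!\left(\tilde B\bigl(x(n),(r|n|)^{-1}\bigr)\right)\lesssim \prod_{i\ne q}\min\!\left\{(r|n|)^{-\frac{1}{m_{q,i}\omega_i}},\,(r|n|)^{-\frac12}(|n_i|/|n|)^{-\frac{m_{q,i}\omega_i-2}{2(m_{q,i}\omega_i-1)}}\right\}.
\]
Squaring $\widehat\chi_{\mathcal D(r,t)}(n)$ and summing over $n\in\mathbb Z^d(j)$ then reduces to a sum over $\mathbb Z^j$ (the nonzero coordinates) of a product of powers of $|n_i|$; the factor $\min\{t|n|,1\}^2$ supplies the needed power of $t$, while the powers of $r$ are collected into $r^{d-1}$ times something bounded. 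The exponents $\alpha_{q,j}$ in \eqref{alphaqj} are exactly designed so that, when $\alpha>\alpha_{q,j}$, the resulting series converges and leaves a positive surplus exponent $\beta_j$ on $t$.

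In more detail, I would first reduce to a single $S$ and a single $q\in S$ by the triangle inequality (there are only finitely many choices). For that fixed $S,q$, write $n$ in terms of its $j$ nonzero coordinates; since for $i\notin S$ we have $n_i=0$ and $|n_i|^{\text{power}}$ does not appear, the product over $i\ne q$ in Lemma~\ref{lemmacap} splits into the indices $i\in S\setminus\{q\}$ (genuine powers of $|n_i|/|n|$) and the indices $i\notin S$ (pure powers of $r|n|$). Using the elementary inequality $\min\{a,b\}\le a^\theta b^{1-\theta}$ for any $\theta\in[0,1]$, interpolate each min-term in the cap bound, choosing the interpolation parameters so that the powers of $|n|$ appearing everywhere combine into a single exponent that makes the $\mathbb Z^j$-sum convergent while extracting a small positive power of $t$ from $\min\{t|n|,1\}^2$. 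The bookkeeping of powers of $r$: each factor $(r|n|)^{-1/(m_{q,i}\omega_i)}$ for $i\notin S$ and each interpolated factor $(r|n|)^{-1/2}$ for $i\in S\setminus\{q\}$ contributes a negative power of $r$; together with the $r^{d-1}$ out front and the $r$-dependence hidden in $t\le Cr^{-\alpha}$, the hypothesis $\alpha>\alpha_{q,j}$ is precisely what guarantees the total $r$-exponent does not exceed $d-1$ and the total $t$-exponent is $1+\beta_j$ with $\beta_j>0$. I expect the cleanest route is: (i) dyadically decompose $|n|\asymp 2^k$, handle $t2^k\le 1$ and $t2^k>1$ separately; (ii) in each dyadic shell bound the number-of-lattice-points $\times$ size-squared and sum a geometric-type series in $k$; (iii) check that both the "small $k$" (using $\min\{t|n|,1\}=t|n|$, i.e.\ the $\alpha_{q,j}$ first-branch exponent) and "large $k$" regimes (using $\min\{t|n|,1\}=1$, i.e.\ the second-branch exponent) converge with room to spare.

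The main obstacle I anticipate is the combinatorial optimization over $S$ and over the interpolation parameters: one must verify that the particular formula \eqref{alphaqj}, with its $\max$ over $S\owns q$ and its two competing expressions, is exactly the threshold making \emph{every} piece of the decomposed sum converge simultaneously — in particular that the two branches in the $\max$ correspond precisely to the $t|n|\le 1$ and $t|n|>1$ regimes, and that the choice of $q$ with $|\xi_q|/|\xi|\ge\varepsilon_0$ (which may vary with $n$, but only over finitely many options) does not spoil uniformity. A secondary technical point is that Lemma~\ref{lemmacap} is stated for $\xi\in\mathbb R^d$ with all the needed normalization, so one must apply it to $\xi=r|n|\cdot(n/|n|)$ and track that the implicit constants stay independent of $r,t,n$; this is routine since the cap estimate is scale-covariant. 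Once the exponent arithmetic is pinned down, $\beta_j$ can be taken to be, say, any positive number below $\min_{q\in S,\,S\in P_j}(\alpha-\alpha_{q,j})$ times an explicit positive constant coming from the surplus, and \eqref{summation over Zd(j)} follows.
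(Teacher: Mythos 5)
Your proposal follows essentially the same route as the paper: use the first line of \eqref{e2} together with the cap bound of Lemma~\ref{lemmacap}, fix a subset $S$ and a coordinate $q$ with $|n_q|/|n|\ge\varepsilon_0$ (the paper does this via $S_{j,1}$ with $q=1$ and says ``all other cases can be handled similarly''), and then sum, extracting the power of $t$ from $\min\{t|n|,1\}^2$. The paper's bookkeeping is a polar comparison (a radial integral $I_0$ carrying the $\min\{t^2s^2,1\}$ factor and angular integrals $I_1,\dots,I_{j-1}$ which are uniformly bounded precisely because each exponent $\tfrac{m_{1,l}\omega_l-2}{m_{1,l}\omega_l-1}$ is strictly less than $1$) rather than a dyadic decomposition, and it does not interpolate the two branches of the min in Lemma~\ref{lemmacap} but simply takes the first branch when $n_i=0$ (forced) and the second branch when $n_i\neq0$; nevertheless the resulting exponent arithmetic, including the dichotomy $C_{j,1}<1$ vs.\ $C_{j,1}\ge1$ matching the two expressions in the definition of $\alpha_{q,j}$ that you correctly anticipate, is the same.
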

%%%%%%%%%%%%%%%%%%%%%%%%%%%%%%%%%%%%%%%%%%%%%%%%%%%%%%%%%%%%%%%%%%%%%%%%%%%%%%%%%%%%%%%%%%%%%%%%%%%%%%%%%%%
\begin{proof}
Given any $1\le j\le d-1$, let $C_{j,1}=\sum_{l=j+1}^{d}2/(m_{1,l}\omega_l)$ and
\begin{equation*}
S_{j,1}=\{(n_1,\ldots,n_j,0,\ldots,0):n_1,\ldots,n_j\in\mathbb N, n_1/|n|\ge \varepsilon_0\}.
\end{equation*}
Then $S_{j,1}$ is a subset of $\mathbb{Z}^d(j)$. We only prove \eqref{summation over Zd(j)} when the summation is over all $n\in S_{j,1}$ as all other cases can be handled similarly. From
\eqref{e2} and Lemma \ref{lemmacap}, we have
\begin{equation*}
\begin{split}
&\sum _{n\in S_{j,1}}|\widehat{\chi}_{\mathcal D(r,t)}(n)|^2\lesssim\\
&\left\{\begin{array}{ll}
\!\!r^{2d-2-C_{1,1}}\sum\limits_{n_1=1}^{+\infty}n_1^{-2-C_{1,1}}\min\{t^2n_1^2,1\} & \text{if $j=1$},\\
\!\!r^{2d-1-j-C_{j,1}}\!\!\sum\limits_{n\in S_{j,1}}\!\!|n|^{-1-j-C_{j,1}}\!\!\prod\limits_{l=2}^{j}\!\left(\frac{n_l}{|n|}\right)^
{-\frac{m_{1,l}\omega_l-2}{m_{1,l}\omega_l-1}}\min\{t^2|n|^2,1\} & \text{if $j\ge2$}.
\end{array}
\right.
\end{split}
\end{equation*}

If $j=1$, note that
\begin{equation}\label{radial integral}
\begin{split}
\sum_{n_1=1}^{+\infty}n_1^{-2-C_{1,1}}\min\{t^2n_1^2,1\}&\lesssim\int_{1}^{+\infty}s^{-2-C_{1,1}}\min\{t^2s^2,1\}\,\mathrm ds\\
&\lesssim \left\{\begin{array}{ll}
t^{1+C_{1,1}}&\text{if $C_{1,1}<1$},\\
t^2\log\left(t^{-1}\right)&\text{if $C_{1,1}=1$},\\
t^2&\text{if $C_{1,1}>1$}.
\end{array}
\right.
\end{split}
\end{equation}
Then if $\alpha>\alpha_1$ and $t\lesssim r^{-\alpha}$, there is a constant $\beta_1>0$ such that
\begin{equation*}
\sum _{n\in S_{1,1}}|\widehat{\chi}_{\mathcal D(r,t)}(n)|^2\lesssim r^{d-1}t^{1+\beta_1}.
\end{equation*}

If $2\le j\le d-1$, comparing the summation to integral yields
\begin{equation*}
\sum _{n\in S_{j,1}}|\widehat{\chi}_{\mathcal D(r,t)}(n)|^2\lesssim r^{2d-1-j-C_{j,1}}\prod _{i=0}^{j-1}I_i,
\end{equation*}
where
\begin{align*}
I_0=\int_1^{+\infty}s^{-2-C_{j,1}}\min\{t^2s^2,1\}\,\mathrm ds
\end{align*}
has the similar bounds in \eqref{radial integral},
\begin{align}\label{first angle}
I_1=\int_{0}^{\pi/2}(\sin\phi_1)^{-\sum_{l=2}^{j}\frac{m_{1,l}\omega_l-2}{m_{1,l}\omega_l-1}+j-2}
\,\mathrm d\phi_1 \lesssim 1
\end{align}
and
\begin{align}\label{ith angle}
I_i=\int_{0}^{\pi/2}(\sin\phi_i)^{-\sum_{l=i+1}^{j}\frac{m_{1,l}\omega_l-2}
{m_{1,l}\omega_l-1}+j-i-1} (\cos\phi_i)^{-\frac{m_{1,i}\omega_i-2}{m_{1,i}\omega_i-1}}\,\mathrm d\phi_i\lesssim 1
\end{align}
for $2\le i\le j-1$. So if $\alpha>\alpha_j$ and $t\lesssim r^{-\alpha}$, we have
\begin{equation*}
\sum_{n\in S_{j,1}}|\widehat{\chi}_{\mathcal D(r,t)}(n)|^2\lesssim r^{d-1}t^{1+\beta_j}
\end{equation*}
 for some constant $\beta_j>0$ . This finishes the proof.
\end{proof}

We will use the following Lemma \ref{l4}--\ref{estimateY(r,t,n)} to obtain
\begin{equation}\label{Zd(d)}
\sum _{n\in\mathbb{Z}^d(d)}|\widehat{\chi}_{\mathcal D(r,t)}(n)|^2= \vol(\mathcal D)dr^{d-1} t+O\left(r^{d-1}t^{1+\beta}\right)
\end{equation}
for some constant $\beta>0$. Then Theorem \ref{variance} follows from \eqref{variance2}, \eqref{Zd(d)} and Lemma \ref{lemmaZd(j)}.
\begin{lemma}\label{l4}
If $1< r<\infty$, $0<t\le Cr^{-\alpha}$ for some constant $C>0$ and $\alpha>\max_{1\le i\le d-1}\{\alpha_i\}$ with $\alpha_i$ defined by \eqref{alphaj}, then  there is a constant $\delta_1>0$ such that
\begin{equation*}
\begin{split}
\sum _{n\in\mathbb{Z}^d(d)}|\widehat{\chi}_{\mathcal D(r,t)}(n)|^2
=X(r,t)+Y(r,t)+Z(r,t),
\end{split}
\end{equation*}
where
\begin{align}\label{X(r,t)}
X(r,t)=2\pi^{-2}r^{d-1}\sum _{n\in \mathbb Z^d(d)}|n|^{-d-1}(K(x(n)))^{-1}\sin^2(\pi tx(n)\cdot n),
\end{align}
\begin{equation}\label{Y(r,t)}
\begin{split}
Y(r,t)=2\pi^{-2}r^{d-1}\sum _{n\in \mathbb Z^d(d)}&|n|^{-d-1}(K(x(n)))^{-1}\sin^2(\pi tx(n)\cdot n)\\
&\cos(4\pi rx(n)\cdot n-(d-1)\pi/2)
\end{split}
\end{equation}
and
\begin{equation*}
Z(r,t)\lesssim r^{d-1}t\left(r^{-1}t\right)^{\delta_1}.
\end{equation*}
Here the implicit constant is independent of $r$ and $t$.
\end{lemma}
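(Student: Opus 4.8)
The plan is to substitute the two-term expansion of Lemma~\ref{A and B} into $|\widehat\chi_{\mathcal D(r,t)}(n)|^2$, peel off $X(r,t)$ and $Y(r,t)$ by an elementary trigonometric identity, and collect the rest into $Z(r,t)$. For $n\in\mathbb Z^d(d)$ the curvature $K(x(n))$ is nonzero, so Lemma~\ref{A and B} gives $\widehat\chi_{\mathcal D(r,t)}(n)=A(r,t,n)+B(r,t,n)$ with $A$ as in \eqref{A(rtn)} and $B$ obeying the two bounds in \eqref{e3}. Since all three quantities are real, $|\widehat\chi_{\mathcal D(r,t)}(n)|^2=A(r,t,n)^2+2A(r,t,n)B(r,t,n)+B(r,t,n)^2$. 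Into the first summand I would insert the explicit form \eqref{A(rtn)} and apply $\cos^2\theta=\tfrac12(1+\cos 2\theta)$ with $\theta=2\pi rx(n)\cdot n-\pi(d-1)/4$, noting $2\theta=4\pi rx(n)\cdot n-(d-1)\pi/2$; after summing over $n\in\mathbb Z^d(d)$, the constant half of the identity is exactly $X(r,t)$ of \eqref{X(r,t)} and the $\cos2\theta$ half is exactly $Y(r,t)$ of \eqref{Y(r,t)}. Setting
\begin{equation*}
Z(r,t):=\sum_{n\in\mathbb Z^d(d)}\bigl(2A(r,t,n)B(r,t,n)+B(r,t,n)^2\bigr),
\end{equation*}
the asserted identity then holds by construction, and the remaining task is the bound $|Z(r,t)|\lesssim r^{d-1}t\,(r^{-1}t)^{\delta_1}$.

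The next step is to reduce this to one estimate for $\sum_n B^2$. From \eqref{A(rtn)} and $x(n)\cdot n\asymp|n|$ one has $|A(r,t,n)|\lesssim r^{(d-1)/2}|n|^{-(d+1)/2}(K(x(n)))^{-1/2}\min\{t|n|,1\}$, and then, using the lower bound for $K(x(n))$ in Lemma~\ref{relation} and the comparison of the sum to an integral exactly as in the proof of Lemma~\ref{lemmaZd(j)} (dyadic decomposition in $|n|$, spherical coordinates, angular integrals of the type \eqref{first angle}--\eqref{ith angle}), one obtains $\sum_{n\in\mathbb Z^d(d)}A(r,t,n)^2\lesssim r^{d-1}t$. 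By Cauchy--Schwarz, $|Z(r,t)|\le 2\bigl(\sum_nA^2\bigr)^{1/2}\bigl(\sum_nB^2\bigr)^{1/2}+\sum_nB^2$, so it suffices to prove
\begin{equation*}
\sum_{n\in\mathbb Z^d(d)}B(r,t,n)^2\lesssim r^{d-1}t\,(r^{-1}t)^{2\delta_1}
\end{equation*}
for some $\delta_1>0$, which already yields $|Z(r,t)|\lesssim r^{d-1}t\,(r^{-1}t)^{\delta_1}$.

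To estimate $\sum_nB^2$ I would bound $B(r,t,n)$ by the minimum of the two expressions in \eqref{e3}, decompose $\mathbb Z^d(d)$ according to whether $|n|\le t^{-1}$ or $|n|>t^{-1}$, and inside each range cut the angular variables at a threshold $\varepsilon$ (to be optimized): in the thin region where some $|n_i|/|n|<\varepsilon$ the high negative power $(K(x(n)))^{-4\tilde N}$ makes the second bound useless, so there one keeps the first bound, whose single factor $(K(x(n)))^{-1/2}$ stays integrable by Lemma~\ref{relation}; away from that region one keeps the second bound, whose strictly smaller power $r^{d-1-\tilde N}<r^{(d-1)/2}$ of $r$ and extra factor $t$ are what supply the saved power. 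Each contribution is again bounded by comparison to an integral, one being a small positive power of $\varepsilon$ and the other a (possibly large) negative power of $\varepsilon$ times a favourable power of $r^{-1}t$; optimizing $\varepsilon$ balances them and gives the displayed bound. One power of $r^{-1}t$ comes from the fact that every bound for $B$ carries a factor $t$, so $B^2$ carries $t^2$ against the single $t$ of the target; the other comes from $r^{d-1-\tilde N}$ and from radial integrals that are strictly subcritical because $\alpha>\max_{1\le i\le d-1}\{\alpha_i\}$.

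The step I expect to be the main obstacle is precisely this bookkeeping over the angular regions. One must perform the cutoff not just at one coordinate hyperplane but simultaneously near the intersections of several of them, track how the exponents of $(K(x(n)))^{-1/2}$ and $(K(x(n)))^{-4\tilde N}$ combine with the dimensions of these regions, and verify that in the worst region all the resulting angular integrals still converge and leave a genuine positive power of $r^{-1}t$ below the baseline $r^{d-1}t$. This is where the hypothesis $\alpha>\max_{1\le i\le d-1}\{\alpha_i\}$ enters, in the same manner as in \eqref{first angle}--\eqref{ith angle}, and where the high power $4\tilde N$, which does not occur in Lemma~\ref{lemmaZd(j)}, forces the extra computation referred to in the introduction.
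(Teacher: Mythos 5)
Your proposal is correct and matches the paper's own argument: both start from Lemma~\ref{A and B}, write $|\widehat\chi_{\mathcal D(r,t)}(n)|^2 = A^2 + 2AB + B^2$, use the power-reduction identity for $\cos^2$ to identify $\sum A^2 = X + Y$, and then bound $\sum B^2$ by splitting $\mathbb Z^d(d)$ into a thin angular region near the coordinate hyperplanes (where the first bound of \eqref{e3} is used and $(K(x(n)))^{-1/2}$ stays integrable by Lemma~\ref{relation}) and its complement (where the second bound with the huge power $(K(x(n)))^{-4\tilde N}$ is acceptable because $K\gtrsim\epsilon$), optimizing the threshold $\epsilon$ at the end. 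The only real variation is cosmetic: the paper bounds $\sum|AB|$ directly by the same two-region argument, while you reduce it to the $\sum B^2$ estimate via Cauchy--Schwarz plus the easy observation $\sum A^2\lesssim r^{d-1}t$; both routes give the same conclusion. One small inaccuracy in your narrative: the hypothesis $\alpha>\max_i\alpha_i$ is not actually invoked anywhere in the paper's proof of this lemma (the angular integrals converge because $K^{-1}$ is integrable over the sphere, and the radial one because $\tilde N>(d-1)/2$); the bound $Z\lesssim r^{d-1}t(r^{-1}t)^{\delta_1}$ comes out unconditionally, and the $\alpha$-condition only enters elsewhere in the proof of Theorem~\ref{variance}.
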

%%%%%%%%%%%%%%%%%%%%%%%%%%%%%%%%%%%%%%%%%%%%%%%%%%%%%%%%%%%%%%%%%%%%%%%%%%%%%%%%%%%%%%%%%%%%%%%%%%%%%%%%%%%%%
\begin{proof}
By using Lemma  \ref{A and B}, we have
\begin{align*}
\sum _{n\in\mathbb{Z}^d(d)}|\widehat{\chi}_{\mathcal D(r,t)}(n)|^2=\sum _{n\in \mathbb{Z}^d(d)}&\Big(|A(r,t,n)|^2+|B(r,t,n)|^2\\
&+\overline{A(r,t,n)}B(r,t,n)+A(r,t,n)\overline{B(r,t,n)}\Big).
\end{align*}
Combining \eqref{A(rtn)} and the power-reduction formula for the cosine yields
\begin{align*}
\sum_{n\in \mathrm Z^d(d)}|A(r,t,n)|^2= X(r,t)+Y(r,t).
\end{align*}
Then to estimate $Z(r,t)$ it suffices to show that
 \begin{equation}\label{Z(r,t)}
\sum _{n\in \mathbb{Z}^d(d)}\left(|B(r,t,n)|^2+\left|{A(r,t,n)}B(r,t,n)\right|\right)\lesssim r^{d-1}t\left(r^{-1}t\right)^{\delta_1}.
\end{equation}

Let
\begin{equation*}
S_{d,1}=\{(n_1,\ldots,n_d):n_1,\ldots,n_d\in\mathbb N, n_1/|n|\ge \varepsilon_0\}.
\end{equation*}
Then $S_{d,1}$ is a subset of $\mathbb{Z}^d(d)$. We only prove \eqref{Z(r,t)} when the summation is over all $n\in S_{d,1}$ as all other cases can be handled similarly. From Lemma \ref{A and B}, we have
\begin{equation}\label{bound of |B|^2}
\begin{split}
\sum _{n\in S_{d,1}}|B(r,t,n)|^2\lesssim I+II
,
\end{split}
\end{equation}
where
\begin{align*}
I&=r^{d-1}\sum _{n\in S_{d,1},K(x(n))<\epsilon}|n|^{-(d+1)}(K(x(n)))^{-1}\min\{t^2|n|^2,1\},\\
II&=r^{-\tilde N+3(d-1)/2}\sum _{n\in S_{d,1},K(x(n))\ge\epsilon}\begin{aligned}[t]&|n|^{-\tilde N-(d+1)/2}t(K(x(n)))^{-4\tilde N-1/2}\\
&\min\{t|n|,1\}
\end{aligned}
\end{align*}
and $\epsilon >0$ is determined below.
 It is easy to get that
\begin{equation}
\begin{split}\label{bound of b2}
II&\lesssim r^{-\tilde N+3(d-1)/2}\int_{1}^{+\infty}s^{-\tilde N+(d-3)/2}t\min\{ts,1\}\,\mathrm ds\int_{S^{d-1}}\epsilon^{-4\tilde N-1/2}\,\mathrm dS\\
&\lesssim r^{d-3/2}t^{3/2}\epsilon^{-4\tilde N-1/2},
\end{split}
\end{equation}
where $\mathrm dS$ is the induced measure on the sphere.

For $I$, by Lemma \ref{relation} we have
\begin{equation*}
\prod _{i=2}^{d}\left(n_i/|n|\right)^{\frac{m_{1,i}\omega_i-2}{m_{1,i}\omega_i-1}}\lesssim \epsilon,
\end{equation*}
which implies that there exists an integer $2\le i_0\le d$ such that
\begin{equation}\label{ni0}
(n_{i_0}/|n|)^{\frac{m_{1,i_0}\omega_{i_0}-2}{m_{1,i_0}\omega_{i_0}-1}}\lesssim \epsilon^{1/(d-1)}.
 \end{equation}
Let
\begin{equation*}
\omega=\max_{1\le q,l\le d}\{m_{q,l}\omega_l\}
\end{equation*}
and $c'$ be a constant satisfying $0<c'<1/(\omega-1)$. Then
\begin{equation*}
I\lesssim r^{d-1}t\int_{S^{d-1}}|\nu_{i_0}|^{-\frac{m_{1,i_0}\omega_{i_0}-2}{m_{i_0,l}\omega_{i_0}-1}-c'}\epsilon^{c_{i_0}}\prod _{l=2,l\ne i_0}^{d-1}|\nu_l|^{-\frac{m_{1,l}\omega_l-2}{m_{1,l}\omega_l-1}}\,\mathrm dS(\nu),
\end{equation*}
where $c_{i_0}=\frac{c'(m_{1,i_0}\omega_{i_0}-1)}{(d-1)(m_{1,i_0}\omega_{i_0}-2)}$. Similar to \eqref{first angle} and \eqref{ith angle}, we have
\begin{align} \label{bound of B2}
I\lesssim r^{d-1}t\epsilon^{c_{i_0}}.
\end{align}
Hence if we take $\epsilon=\left(r^{-1}t\right)^{1/(2c_{i_0}+8\tilde N+1)}$, combining \eqref{bound of |B|^2}, \eqref{bound of b2} and \eqref{bound of B2} yields
\begin{align}\label{FBO |B|^2}
\sum _{n\in S_{d,1}}|B(r,t,n)|^2\lesssim
 r^{d-1}t\left(r^{-1}t\right)^{c_{i_0}/(2c_{i_0}+8\tilde N+1)},
\end{align}
where the implicit constant is independent of $r$ and $t$.

On the other hand, we can estimate $\sum_{n\in S_{d,1}}|A(r,t,n)B(r,t,n)|$ in the same way to get the bound in \eqref{FBO |B|^2}.
 This finishes the proof.
\end{proof}

\begin{lemma}
If $1< r<\infty$, $0<t\le Cr^{-\alpha}$ for some constant $C>0$ and $\alpha>\max_{1\le i\le d-1}\{\alpha_i\}$ with $\alpha_i$ defined by \eqref{alphaj}, then  there is a constant $\delta_2>0$ such that
\begin{equation*}
X(r,t)=\vol(\mathcal D)dr^{d-1}t+O\left(r^{d-1}t^{1+c'}
+r^{d-1}t\left(t\log\left(t^{-1}\right)\right)^{\delta_2}\right),
\end{equation*}
where $c'$ is a constant satisfying $0<c'<1/(\omega-1)$ and the implicit constant is independent of $r$ and $t$.
\end{lemma}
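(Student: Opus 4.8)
The plan is to identify the main term $\vol(\mathcal D)d\,r^{d-1}t$ with an explicit integral and then estimate the difference between the lattice sum defining $X(r,t)$ in \eqref{X(r,t)} and that integral. Set
\[
g(\xi)=|\xi|^{-d-1}(K(x(\xi)))^{-1}\sin^2\big(\pi t\,x(\xi)\cdot\xi\big)
\]
for $\xi$ with $K(x(\xi))\ne 0$, so $X(r,t)=2\pi^{-2}r^{d-1}\sum_{n\in\mathbb Z^d(d)}g(n)$. First I would compute $\int_{\mathbb R^d}g$: in polar coordinates $\xi=\rho\theta$ one has $x(\xi)=x(\theta)$, hence $K(x(\xi))=K(x(\theta))$ and $x(\xi)\cdot\xi=\rho h(\theta)$ with $h(\theta)=x(\theta)\cdot\theta>0$, and $\int_0^\infty\rho^{-2}\sin^2(\pi t\rho h(\theta))\,\mathrm d\rho=\tfrac{\pi^2}{2}t\,h(\theta)$, so $\int_{\mathbb R^d}g=\tfrac{\pi^2}{2}t\int_{S^{d-1}}h(\theta)(K(x(\theta)))^{-1}\,\mathrm d\sigma(\theta)$; this is finite because $(K(x(\theta)))^{-1}\lesssim\prod(|\theta_i|/|\theta|)^{-(\omega_i-2)/(\omega_i-1)}$ with all exponents $<1$ by Lemma~\ref{relation}. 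Changing variables along the Gauss map (whose Jacobian is $K$) and then applying the divergence theorem gives $\int_{\mathbb R^d}g=\tfrac{\pi^2}{2}t\int_{\partial\mathcal D}x\cdot n(x)\,\mathrm d\sigma(x)=\tfrac{\pi^2}{2}t\,d\,\vol(\mathcal D)$, so $2\pi^{-2}r^{d-1}\int_{\mathbb R^d}g=\vol(\mathcal D)d\,r^{d-1}t$. It thus remains to show
\[
\sum_{n\in\mathbb Z^d(d)}g(n)-\int_{\mathbb R^d}g=O\big(t^{1+c'}+t(t\log t^{-1})^{\delta_2}\big).
\]

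Tiling $\mathbb R^d$ by the cubes $n+\mathbb T^d$ and noting that, up to a Lebesgue-null set, $\bigcup_{n\in\mathbb Z^d(d)}(n+\mathbb T^d)=\{\xi:|\xi_i|\ge 1/2\text{ for all }i\}$, I would write the left side as $\sum_{n\in\mathbb Z^d(d)}\big(g(n)-\int_{n+\mathbb T^d}g\big)-\int_V g$ with $V=\{\xi:|\xi_i|<1/2\text{ for some }i\}$. For $\int_V g$: on $V$ one has $|\xi_i|/|\xi|\lesssim|\xi|^{-1}$ for some $i$, so combining Lemma~\ref{relation} with the elementary splitting $s^{-a}=s^{-a-c'}s^{c'}$, legitimate since $a=(m_{q,i}\omega_i-2)/(m_{q,i}\omega_i-1)$ satisfies $a+c'<1$ when $c'<1/(\omega-1)$, extracts a gain of $|\xi|^{-1/(m_{q,i}\omega_i-1)}$ from the $\xi_i$-angular integration, and the remaining radial integral is $\int_1^\infty\rho^{-2-1/(m_{q,i}\omega_i-1)}\min\{t^2\rho^2,1\}\,\mathrm d\rho\asymp t^{1+1/(m_{q,i}\omega_i-1)}\le t^{1+c'}$; summing over the finitely many coordinate patches gives $\int_V g\lesssim t^{1+c'}$.

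For the cube-comparison sum I would use $|g(n)-\int_{n+\mathbb T^d}g|\lesssim\sup_{n+\mathbb T^d}|\nabla g|$ together with the bound (coming from $\nabla(x(\xi)\cdot\xi)=x(\xi)$, $|x(\xi)|\asymp1$, and \eqref{partial K})
\[
|\nabla g(\xi)|\lesssim|\xi|^{-d-2}(K(x(\xi)))^{-3}\min\{t^2|\xi|^2,1\}+|\xi|^{-d-1}(K(x(\xi)))^{-1}t\min\{t|\xi|,1\}.
\]
Since $(K)^{-3}$ is not summable against this weight over all of $\mathbb Z^d(d)$, I would split $\mathbb Z^d(d)$ according to whether $|n_i|/|n|\ge\eta$ for every $i$ or not, for a parameter $\eta\in(0,1)$ to be chosen. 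On the first set one has $K(x(\xi))\gtrsim\eta^{\gamma_*}$ uniformly on $n+\mathbb T^d$ for some $\gamma_*>0$ depending only on $\mathcal D$, and summing the displayed bound (comparing with a radial integral, which yields a logarithm at the transitional scale $|n|\sim t^{-1}$) gives $\lesssim\eta^{-3\gamma_*}t^2\log t^{-1}$; on the second set one discards the integral term and bounds $\sum|g(n)|$ over those $n$, and equally the integral of $|g|$ over the corresponding cubes, by the same $s^{-a}=s^{-a-c'}s^{c'}$ device used for the term $I$ in the proof of Lemma~\ref{l4}, obtaining $\lesssim t\eta^{c'}$. Choosing $\eta=(t\log t^{-1})^{1/(c'+3\gamma_*)}$ balances the two contributions into $\lesssim t(t\log t^{-1})^{\delta_2}$ with $\delta_2=c'/(c'+3\gamma_*)>0$; multiplying everything by $2\pi^{-2}r^{d-1}$ gives the claim.

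The step I expect to be the main obstacle is controlling this cube comparison: the curvature factor $(K(x(\xi)))^{-1}$ is singular (though integrable) on the coordinate hyperplanes, and its gradient grows like $(K(x(\xi)))^{-3}|\xi|^{-1}$, which prevents a global mean-value comparison of the sum with the integral. Handling this correctly forces the cone decomposition with the optimized cutoff $\eta$, and producing a genuine power saving $\delta_2>0$ hinges on the sharp degeneration exponent $(m_{q,i}\omega_i-2)/(m_{q,i}\omega_i-1)<1$ from Lemma~\ref{relation} together with the H\"older-type splitting already appearing in Lemma~\ref{l4}.
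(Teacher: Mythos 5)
Your proposal is correct and follows essentially the same route as the paper's proof: you identify $2\pi^{-2}r^{d-1}\int_{\mathbb R^d}g$ with the main term $\vol(\mathcal D)d\,r^{d-1}t$ via polar coordinates, the Gauss map Jacobian $(K(x(\theta)))^{-1}\,\mathrm dS(\theta)=\mathrm d\sigma(x)$ and the divergence theorem, then split the error into (a) the near--coordinate-plane cubes, bounded by $t^{1+c'}$ using the splitting $s^{-a}=s^{-a-c'}s^{c'}$ with $a+c'<1$, and (b) the discrepancy $\sum_{n\in\mathbb Z^d(d)}\big(g(n)-\int_{n+\mathbb T^d}g\big)$, controlled by a small-curvature cutoff optimization giving $t(t\log t^{-1})^{\delta_2}$. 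The only stylistic difference is in step (b): you use a single gradient estimate $|\nabla g|\lesssim |\xi|^{-d-2}K^{-3}\min\{t^2|\xi|^2,1\}+|\xi|^{-d-1}K^{-1}t\min\{t|\xi|,1\}$ and split $\mathbb Z^d(d)$ by an angular cone $|n_i|/|n|\ge\eta$, whereas the paper telescopes the cube discrepancy into three pieces $I_2,I_3,I_4$ (one per factor of $g$) and splits $I_2$ by a cutoff on the curvature proxy $K_L(n)$; the two are equivalent up to reparametrization ($\epsilon\sim\eta^{\gamma_*}$) and arrive at the same balancing exponent.
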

%%%%%%%%%%%%%%%%%%%%%%%%%%%%%%%%%%%%%%%%%%%%%%%%%%%%%%%%%%%%%%%%%%%%%%%%%%%%%%%%%%%%%%%%%%%%%%%%%%%%%%%%%%%%%%%%%%%%%%%%%%%
\begin{proof}
Let $T$ denote the union of coordinate planes. By \eqref{X(r,t)}
\begin{align}\label{X(r,t)2}
X(r,t)=I_0-2\pi^{-2}r^{d-1}\sum _{i=1}^4I_i,
\end{align}
where
\begin{equation*}
I_0=2\pi^{-2}r^{d-1}\int_{\mathbb R^d\backslash T}|\xi|^{-d-1}(K(x(\xi)))^{-1}\sin^2(\pi tx(\xi)\cdot \xi)\,\mathrm d\xi,
\end{equation*}
\begin{equation*}
I_1=\sum _{j=1}^{d-1}\sum _{n\in \mathbb Z^d(j)}\int_{(\mathbb T^d+n)\backslash T}|\xi|^{-d-1}(K(x(\xi)))^{-1}\sin^2(\pi tx(\xi)\cdot \xi)\,\mathrm d\xi,
\end{equation*}
\begin{equation*}
\begin{split}
I_2=\sum _{n\in \mathbb Z^d(d)}\int_{\mathbb T^d}&|\xi+n|^{-d-1}\left((K(x(\xi+n)))^{-1}-(K(x(n)))^{-1}\right)\\
&\sin^2(\pi tx(\xi+n)\cdot (\xi+n))\,\mathrm d\xi,
\end{split}
\end{equation*}
\begin{equation*}
\begin{split}
I_3=\sum _{n\in \mathbb Z^d(d)}(K(x(n)))^{-1}\int_{\mathbb T^d}&|\xi+n|^{-d-1}\big(\sin^2(\pi tx(\xi+n)\cdot(\xi+n))\\
&-\sin^2(\pi tx(n)\cdot n)\big)\,\mathrm d\xi
\end{split}
\end{equation*}
and
\begin{equation*}
I_4=\sum _{n\in \mathbb Z^d(d)}(K(x(n)))^{-1}\sin^2(\pi tx(n)\cdot n)\int_{\mathbb T^d}|\xi+n|^{-d-1}-|n|^{-d-1}\,\mathrm d\xi.
\end{equation*}

Firstly,
\begin{equation}
\begin{split}\label{quantity}
&I_0=2\pi^{-1}r^{d-1} t\int_{0}^{+\infty}(\sin^2s)/s^{2}\,\mathrm ds\int_{S^{d-1}\backslash T}(K(x(v)))^{-1}x(v)\cdot v\,\mathrm dS(v)\\
&\quad=r^{d-1} t\int_{\partial \mathcal D\backslash T}x\cdot n(x)\,\mathrm d \sigma(x)\\
&\quad=\vol(\mathcal D)dr^{d-1} t,
\end{split}
\end{equation}
where in the last two equalities we use the facts that if $K(x(\nu))\ne 0$, there is a small neighborhood of $\nu$ such that $(K(x(\nu)))^{-1}\mathrm dS(\nu)=\mathrm d\sigma(x)$ and $x(v)\cdot v=x\cdot n(x)$ is the height of the cone with vertex the origin and base $\mathrm d\sigma$.

We only estimate $I_1$ when the summation is over all $n\in S_{j,1}$ with $1\le j\le d-1$ and other cases can be handled similarly.
Given any $n\in  S_{j,1}$ and $\xi\in (n+\mathbb T^d)\backslash T$, we have $0<|\xi_d|\le 1/2$ and
\begin{equation*}
\begin{split}
&\sum _{n\in S_{j,1}}\int_{(\mathbb T^d+n)\backslash T}|\xi|^{-d-1}(K(x(\xi)))^{-1}\sin^2(\pi tx(\xi)\cdot \xi)\,\mathrm d\xi\\
&\lesssim\!\int_{1}^{+\infty}\!s^{-2-c'}\min\{t^2s^2,1\}\,\mathrm ds\!\int_{S^{d-1}}\!|\nu_d|^{-\frac{m_{1,d}\omega_{d}-2}{m_{1,d}\omega_d-1}-c'}\prod_{l=2}^{d-1} |\nu_l|^{-\frac{m_{1,l}\omega_{l}-2}{m_{1,l}\omega_l-1}}\,\mathrm dS(\nu)\\
&\lesssim  t^{1+c'}.
\end{split}
\end{equation*}

If $\xi\in \mathbb T^d$ and $n\in \mathbb Z^d(d)$, then $|n_l|\asymp|n_l+\xi_l|$ for $1\le l\le d$. We only estimate $I_2$ when the summation is over all $n\in S_{d,1}$ and other cases can be handled similarly. Given any $n\in S_{d,1}$, by Lemma \ref{lemma1} and lemma \ref{relation} we have
\begin{equation*}
\left|(K(x(n+\xi)))^{-1}\!-(K(x(n)))^{-1}\right|\lesssim\min\left\{(K_{L}(n))^{-1},|n|^{-1}(K_{L}(n))^{-3}\right\},
\end{equation*}
where
$
K_{L}(n)=\prod _{l=2}^d(n_l/|n|)^{\frac{m_{1,l}\omega_l-2}{m_{1,l}\omega_l-1}}
$
and using the subscript ``L'' is because that $K_{L}(n)$ is a ``lower'' bound of $K(x(n))$ in \eqref{relation between point and direction} if $n\in S_{d,1}$.
Hence
\begin{align*}
&\sum _{n\in S_{d,1}}\int_{\mathbb T^d}\begin{aligned}[t]&|\xi+n|^{-d-1}\left((K(x(\xi+n)))^{-1}-(K(x(n)))^{-1}\right)\\
&\sin^2(\pi tx(\xi+n)\cdot (\xi+n))\mathrm d\xi
\end{aligned}\\
&\lesssim \sum _{n\in S_{d,1},K_{L}(n)<\epsilon}|n|^{-d-1}
(K_{L}(n))^{-1}\min\{t^2|n|^2,1\}\\
&\quad+\sum _{n\in S_{d,1},K_{L}(n)\ge\epsilon}|n|^{-d-2}(K_{L}(n))^{-3}\min\{t^2|n|^2,1\}.
\end{align*}
Similar to estimating the upper bound of $\sum_{n\in S_{d,1}}$ $|B(r,t,n)|^2$ in Lemma \ref{l4}, there is a $0<\delta'_2<1$ such that the summations above are bounded by $ O\left(t(t\log(1/t))^{\delta'_2}\right)$.

For $I_3$, given any $s_1,s_2\in \mathbb R$ and nonzero $\xi_1,\xi_2\in \mathbb R^d$, a simple computation shows that
\begin{equation*}
\sin^2(s_1)-\sin^2(s_2)=\sin(s_1+s_2)\sin(s_1-s_2)
\end{equation*}
and
\begin{equation*}
|x(\xi_1)\cdot\xi_1-x(\xi_2)\cdot(\xi_2)|\lesssim |\xi_1-\xi_2|,
\end{equation*}
where the implicit constant is independent of $\xi_1$ and $\xi_2$. So
\begin{equation*}
I_3\lesssim \sum _{n\in \mathbb Z^d(d)}|n|^{-d-1}t(K(x(n)))^{-1}\min\{t|n|,1\}\lesssim t^2\log(1/t).
\end{equation*}

It is easy to check that
\begin{align*}
I_4\lesssim\sum _{n\in \mathbb Z^d(d)}|n|^{-d-2}(K(x(n)))^{-1}\min\{t^2|n|^2,1\}\lesssim  t^2\log(1/t).
\end{align*}
Then combining \eqref{X(r,t)2}, \eqref{quantity} and all bounds for $I_i$ with $1\le i\le 4$ gives the desired result.
\end{proof}

In the next part, we use some smooth functions to split $Y(r,t)$ and estimate each part separately. Let $\varphi(s)$ be a smooth cut-off function with $\supp\varphi=[-3/4,3/4]$ and $\varphi(s)=1$ if $|s|\le 1/4$. Then given any $0<\epsilon<3^{-1/2}$, define
\begin{align*}
\phi_1^{\epsilon}(x)=\prod_{i=1}^{d}\left(1-\varphi\left(\frac{x_i}{\epsilon|x|}\right)\right)
\end{align*}
and
 \begin{equation*}
\phi_2^{\epsilon}(s)=\left\{\begin{array}{ll}
0&\textrm{if $s\le 0$},\\
\varphi(\epsilon s)-\varphi(s/\epsilon)&\textrm{if $s>0$}.
\end{array}
\right.
\end{equation*}
Then we have
\begin{gather*}
 \supp\phi_1^{\epsilon}\subseteq\bigcap _{i=1}^{d}\left\{x\in\mathbb R^d\backslash\{\vec 0\}:|x_i|/|x|\ge\epsilon/4\right\},\\
\phi_1^{\epsilon}(x)=1\quad \text{if}\quad
x\in\bigcap _{i=1}^d\left\{x\in\mathbb R^d\backslash\{\vec 0\}:|x_i|/|x|\ge3\epsilon/4\right\},\\
\supp\phi_2^{\epsilon}=\{s\in\mathbb R: \epsilon/4\le s\le3/(4\epsilon)\},
\end{gather*}
 and $\phi_2^{\epsilon}(s)=1$ if
$3\epsilon/4\le s\le 1/(4\epsilon)$.

\begin{lemma}\label{estimateY(r,t,n)}
If $1< r<\infty$, $0<t\le Cr^{-\alpha}$ for some constant $C>0$ and $\alpha>\max_{1\le i\le d-1}\{\alpha_i\}$ with $\alpha_i$ defined by \eqref{alphaj}, then  there exists a constant $0<\delta_3<1$ such that
\begin{equation*}
Y(r,t)\lesssim r^{d-1}t^{1+\delta_3},
\end{equation*}
where the implicit constant is independent of $r$ and $t$.
\end{lemma}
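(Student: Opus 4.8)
The plan is to handle $Y(r,t)$ in the spirit of the estimates of $X(r,t)$ and of $\sum_{n\in S_{d,1}}|B(r,t,n)|^2$, but now extracting cancellation from the oscillatory factor $\cos(4\pi rx(n)\cdot n-(d-1)\pi/2)$ in \eqref{Y(r,t)}; the parameter $\epsilon$ below will be a small positive power of $t$, fixed at the end. I would first insert $1=\phi_1^{\epsilon}(n)+(1-\phi_1^{\epsilon}(n))$ into \eqref{Y(r,t)}. On $\supp(1-\phi_1^{\epsilon})$ some ratio $|n_i|/|n|$ is $<3\epsilon/4$, so by Lemma \ref{relation} $K(x(n))$ is bounded above by a fixed positive power of $\epsilon$; combined with $|\sin^2(\pi tx(n)\cdot n)|\le\min\{(\pi tx(n)\cdot n)^2,1\}$ and $x(n)\cdot n\asymp|n|$, this part of $Y(r,t)$ is estimated exactly as the quantity $I$ in the proof of Lemma \ref{l4} and is $\lesssim r^{d-1}t\epsilon^{c_0}$ for some $c_0>0$ depending only on $\mathcal D$.

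For the remaining part, which carries $\phi_1^{\epsilon}(n)$ and hence $K(x(n))\gtrsim\epsilon^{d-1}$, I would insert the further partition $1=\varphi(s/\epsilon)+\phi_2^{\epsilon}(s)+(1-\varphi(\epsilon s))$ with $s=tx(n)\cdot n>0$, splitting it into small-, middle-, and large-scale pieces. On the small-scale piece ($tx(n)\cdot n\le 3\epsilon/4$, i.e.\ $|n|\lesssim\epsilon/t$) one has $|\sin^2(\pi tx(n)\cdot n)|\lesssim(t|n|)^2$, and comparing the lattice sum to an integral — here the angular integral $\int_{S^{d-1}}(K(x(v)))^{-1}\,\mathrm dS(v)$ converges because every exponent $\tfrac{m_{q,l}\omega_l-2}{m_{q,l}\omega_l-1}$ appearing in Lemma \ref{relation} is $<1$ — this piece is $\lesssim r^{d-1}t^{2}(\epsilon/t)=r^{d-1}t\epsilon$. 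On the large-scale piece ($tx(n)\cdot n\ge1/(4\epsilon)$, i.e.\ $|n|\gtrsim1/(t\epsilon)$), bounding $|\sin^2|\le1$ gives $\lesssim r^{d-1}\int_{1/(t\epsilon)}^{\infty}s^{-2}\,\mathrm ds=r^{d-1}t\epsilon$.

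The crux is the middle-scale piece, where $|n|\asymp t^{-1}$ up to powers of $\epsilon$ and no pointwise estimate suffices. Writing it as $2\pi^{-2}r^{d-1}\Re\big(e^{-i(d-1)\pi/2}\sum_n\Phi_{\epsilon}(n)e^{4\pi irx(n)\cdot n}\big)$ with $\Phi_{\epsilon}(\xi)=|\xi|^{-d-1}(K(x(\xi)))^{-1}\sin^2(\pi tx(\xi)\cdot\xi)\phi_1^{\epsilon}(\xi)\phi_2^{\epsilon}(tx(\xi)\cdot\xi)$, I would apply Poisson summation and then substitute $\xi=\eta/t$ in each integral $\int_{\mathbb R^d}\Phi_{\epsilon}(\xi)e^{4\pi irx(\xi)\cdot\xi-2\pi im\cdot\xi}\,\mathrm d\xi$. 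Since $x(\cdot)$ is homogeneous of degree $0$, the amplitude becomes $t^{d+1}\Psi_{\epsilon}(\eta)$, where $\Psi_{\epsilon}(\eta)=|\eta|^{-d-1}(K(x(\eta)))^{-1}\sin^2(\pi x(\eta)\cdot\eta)\phi_1^{\epsilon}(\eta)\phi_2^{\epsilon}(x(\eta)\cdot\eta)$ (the $t$ inside $\sin^2$ and $\phi_2^{\epsilon}$ cancels), and the phase becomes $2\pi(r/t)(2x(\eta)\cdot\eta-(m/r)\cdot\eta)$. By \eqref{partial K}, \eqref{partial x(xi)}, Lemma \ref{lemma1} and the product rule, $\Psi_{\epsilon}$ meets the hypotheses of Lemma \ref{two} with parameter comparable to $\epsilon$, so for each fixed $k$ the $m$-th Poisson coefficient is $\lesssim t\epsilon^{-\gamma_k}\min\{(t/|m|)^{(d-1)/2},((r/t)\dist(m/r,\partial(2\mathcal D)))^{-k}\}$. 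Summing over $m$: the term $m=0$ and all $m$ with $m/r$ outside a fixed dilate of $2\mathcal D$ contribute $\lesssim\epsilon^{-\gamma_k}tr^{d-1}t^{k}$ after a dyadic decomposition, hence are negligible for $k$ large; the ``resonant'' $m$ with $\dist(m/r,\partial(2\mathcal D))<1/r$ — equivalently $\dist(m,2r\partial\mathcal D)<1$ — number only $\lesssim r^{d-1}$ (a unit tube about a convex surface of size $r$) and each has $|m|\asymp r$, so the first bound contributes $\lesssim\epsilon^{-\gamma_k}t(t/r)^{(d-1)/2}$ apiece; the $m$ with $\dist(m/r,\partial(2\mathcal D))\ge1/r$ are handled dyadically by the second bound. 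Altogether the middle-scale piece is $\lesssim\epsilon^{-\gamma_k}r^{d-1}t\cdot r^{d-1}(t/r)^{(d-1)/2}=\epsilon^{-\gamma_k}r^{3(d-1)/2}t^{(d+1)/2}$, once $k$ is taken large enough in terms of $d,\alpha$ so that $r^{d-1}t^{k}\lesssim r^{d-1}(t/r)^{(d-1)/2}$ under $t\le Cr^{-\alpha}$.

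Finally, putting $\epsilon=t^{a}$, the three pieces give $Y(r,t)\lesssim r^{d-1}t^{1+ac_0}+r^{d-1}t^{1+a}+r^{3(d-1)/2}t^{(d+1)/2-a\gamma_k}$. Since $\tfrac{m_{q,l}\omega_l-2}{2}\ge1$ whenever $m_{q,l}\omega_l\ge4$, the second entry in \eqref{alphaqj} with $j=d-1$ is $\ge1$, so $\alpha_{q,d-1}\ge1$ for every $q$ and hence $\alpha>\max\{\alpha_1,\dots,\alpha_{d-1}\}\ge1$; using $t\le Cr^{-\alpha}$ one then checks that the third term is $\lesssim r^{d-1}t^{1+\delta_3}$ whenever $a\gamma_k+\delta_3<(d-1)(\alpha-1)/(2\alpha)$, a positive quantity. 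So, fixing $k$ and then choosing $\delta_3\in(0,1)$ and $a>0$ small enough, all three estimates hold with this common $\delta_3$, which is the claim. I expect the middle-scale piece to be the main obstacle: it requires marrying Poisson summation to the oscillatory bound of Lemma \ref{two}, and in particular controlling the resonant frequencies $m$ for which $m/r$ lies near $\partial(2\mathcal D)$, where the rapid-decay alternative of Lemma \ref{two} is unavailable and one must instead exploit that there are only $O(r^{d-1})$ such $m$, the $(d-1)/2$-order stationary-phase decay, and the hypothesis $t\le Cr^{-\alpha}$ with $\alpha>1$.
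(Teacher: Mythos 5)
Your proposal follows the same overall strategy as the paper: split off the low-curvature lattice points with $\phi_1^{\epsilon}$, split off the small and large radii with $\phi_2^{\epsilon}$, apply Poisson summation to the remaining piece, and invoke Lemma~\ref{two} on the resulting oscillatory integrals. The place where you genuinely diverge is the count of resonant frequencies. You estimate the number of $m$ with $\dist(m,\partial(2r\mathcal D))<1$ by the crude volume-of-a-tube bound $O(r^{d-1})$, and this costs you a full factor $r^{d-1}$ in the middle-scale piece, yielding $r^{3(d-1)/2}t^{(d+1)/2}$ (up to the $\epsilon^{-\gamma_k}$ factor). The paper instead takes a much thinner shell of width $s=r^{-(d-1)/\omega'}$, observes via Lemma~29 in \cite{convexdomain} that such $m$ have $|m|\gtrsim r$, and then invokes the lattice-point theorem (Theorem 1.1 in \cite{model2}) to count these $m$ by $O(r^{(d-1)(1-1/\omega')})$; the $m$ in the complementary annulus $s\le\dist<1$ are then absorbed into the rapid-decay estimate. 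This saves a factor $r^{-(d-1)/\omega'}$ and hence gives a cleaner (and larger) exponent $\delta_3$. Your compensating observation — that the hypothesis $\alpha>\max_j\alpha_j$ forces $\alpha>1$, because the second entry in \eqref{alphaqj} with $j=d-1$ evaluates to $\tfrac{m_{q,l_0}\omega_{l_0}-2}{2}\ge1$ — is correct and, while implicitly relied on by the paper, is not spelled out there; it is what makes the cruder count sufficient. The trade-off is clear: your argument avoids quoting the lattice-point counting theorem from \cite{model2} (replacing it by a triviality), at the cost of a smaller final exponent $\delta_3$; both are valid, and since the lemma only asserts existence of some $\delta_3>0$, your route proves the statement. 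One small note: the argument of $\phi_2^{\epsilon}$ in the paper is $t|n|$ while you use $tx(n)\cdot n$; since $x(n)\cdot n\asymp|n|$ this changes nothing of substance, and your choice is slightly tidier after the $\xi=\eta/t$ substitution.
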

%%%%%%%%%%%%%%%%%%%%%%%%%%%%%%%%%%%%%%%%%%%%%%%%%%%%%%%%%%%%%%%%%%%%%%%%%%%%%%%%%%%%%%%%%%%%%%%%%%
\begin{proof}
By \eqref{Y(r,t)} we rewrite $Y(r,t)=I+II$ with
\begin{equation*}
\begin{split}
I=2\pi^{-2}r^{d-1}\sum _{n\in \mathbb Z^d(d)} &\phi_1^{\epsilon}(n)|n|^{-d-1}(K(x(n)))^{-1}\sin^2(\pi tx(n)\cdot n)\\
&\cos\left(4\pi rx(n)\cdot n-(d-1)\pi/2\right)
\end{split}
\end{equation*}
and
\begin{equation*}
\begin{split}
II=2\pi^{-2}r^{d-1}\sum _{n\in \mathbb Z^d(d)} &\left(1-\phi_1^{\epsilon}(n)\right)|n|^{-d-1}(K(x(n)))^{-1}\sin^2(\pi tx(n)\cdot n)\\
&\cos\left(4\pi rx(n)\cdot n-(d-1)\pi/2\right),
\end{split}
\end{equation*}
where $\epsilon>0$ is determined below.

If $n\in \mathbb Z^d(d)\cap \mathrm{supp}\left(1-\phi_1^{\epsilon}(n)\right)$, we have
$|n_j|/|n|\le3\epsilon/4$
 for some $1\le j\le d$. Similar to the case of estimating $\sum_{n\in S_{d,1}}|B(r,t,n)|^2 $ under the condition \eqref{ni0} in Lemma \ref{l4}, there is a $0<\Lambda<1$ such that
\begin{equation}\label{II}
II\lesssim r^{d-1}t\epsilon^{\Lambda}.
 \end{equation}

To study $I$, we rewrite $I=2\pi^{-2}(I_1+I_2)$ with
\begin{align*}
I_1=r^{d-1}\sum _{n\in \mathbb Z^d(d)}&\phi_2^{\epsilon}(t|n|)\phi_1^{\epsilon}(n)|n|^{-d-1}(K(x(n)))^{-1}\sin^2(\pi tx(n)\cdot n)\\
&\cos\left(4\pi rx(n)\cdot n-(d-1)\pi/2\right)
\end{align*}
and
\begin{align*}
I_2=r^{d-1}\sum _{n\in \mathbb Z^d(d)}&\left(1-\phi_2^{\epsilon}(t|n|)\right)\phi_1^{\epsilon}(n)|n|^{-d-1}(K(x(n)))^{-1}\sin^2(\pi tx(n)\cdot n)\\
&\cos\left(4\pi rx(n)\cdot n-(d-1)\pi/2\right).
\end{align*}
Then we have
\begin{equation}
\begin{split}\label{I2}
I_2&\lesssim \begin{aligned}[t]& r^{d-1}\sum _{n\in\mathbb Z^d(d),|n|\le 3\epsilon/(4t)}|n|^{-d+1}(K(x(n)))^{-1}t^2\\
&+r^{d-1}\sum _{n\in\mathbb Z^d(d),|n|\ge 1/(4\epsilon t)}|n|^{-d-1}(K(x(n)))^{-1}\end{aligned}\\
&\lesssim r^{d-1}t\epsilon.
\end{split}
\end{equation}

By Poisson summation formula
\begin{align*}
 I_1&= r^{d-1}t\sum _{n\in\mathbb Z^d}\begin{aligned}[t]&t^d\phi_2^{\epsilon}(t|n|)\phi_1^{\epsilon}(tn)(t|n|)^{-d-1}(K(x(tn)))^{-1}\sin^2(\pi x(tn)\cdot tn)\\
&\cos\left(4\pi rt^{-1}x(tn)\cdot tn-(d-1)\pi/2\right)\end{aligned}\\
&=r^{d-1}t\sum _{n\in\mathbb Z^d}\hat f(n/t),
\end{align*}
with
\begin{align*}
f(\xi)=&\phi_2^{\epsilon}(|\xi|)\phi_1^{\epsilon}(\xi)|\xi|^{-d-1}(K(x(\xi)))^{-1}\sin^2(\pi x(\xi)\cdot \xi)\\
&\cos\left(4\pi r t^{-1}x(\xi)\cdot \xi-(d-1)\pi/2\right).
\end{align*}
We only need to estimate the summation $\sum _{n\in\mathbb Z^d}\hat f(n/t)$. Let
\begin{equation*}
\phi(\xi)=\phi_2^{\epsilon}(|\xi|)\phi_1^{\epsilon}(\xi)|\xi|^{-d-1}(K(x(\xi)))^{-1}\sin^2(\pi x(\xi)\cdot \xi).
\end{equation*}
Then
\begin{equation}\label{Y1}
\begin{split}
\hat f(n/t)
=&2^{-1}e^{-(d-1)\pi i/2}\int_{\mathbb R^d}\phi(\xi)e^{2\pi irt^{-1}(2x(\xi)\cdot \xi-\xi\cdot n/r)}\,\mathrm d\xi\\
&+2^{-1}e^{(d-1)\pi i/2}\int_{\mathbb R^d}\phi(\xi)e^{-2\pi irt^{-1}(2x(\xi)\cdot \xi+\xi\cdot n/r)}\,\mathrm d\xi.
\end{split}
\end{equation}
Given any $\xi\in \supp\phi$, we have $\epsilon\lesssim|\xi|\lesssim 1/\epsilon$ and $(K(x(\xi)))^{-1}\lesssim \epsilon^{-(d-1)}$ by Lemma \ref{relation}. Hence by Lemma \ref{lemma1}, given any multi-index $\nu$, we have
\begin{equation*}
\begin{split}
\frac{\partial^{|\nu|}\phi(\xi)}{\partial \xi^{\nu}}
\lesssim\epsilon^{-d-1-(2d-1)|\nu|}.
\end{split}
\end{equation*}
 Applying Lemma \ref{two} with $\lambda=rt^{-1}$ and $\zeta=\pm r^{-1}n$ to \eqref{Y1} yieids
\begin{equation*}
\hat f(n/t)\lesssim
\epsilon^{-\tilde\gamma_k}\min\left\{(|n|t^{-1})^{-(d-1)/2},
(t^{-1}\,\mathrm{dist}\{n,\partial(2r\mathcal D)\})^{-k}\right\},
\end{equation*}
where $k,\tilde\gamma_k\in \mathbb N$ and the implicit constant is independent of $r,t$ and $n$. Here we choose $k>\max\left\{d,\left(\alpha-\frac{d-1}{\omega'}\right)^{-1}(d-1)\left(\frac{1+\alpha}{2}+\frac{1}{\omega'}\right)\right\}$.
Given any $0\le a<b$, denote
 \begin{equation*}
\{\mathrm a\le d(n)<b\}=\{n\in \mathbb Z^d:a\le \dist\{n,\partial{(2r\mathcal D)}\}<b\}
\end{equation*}
and $s=r^{-(d-1)/\omega'}$ with $\omega'=\max_{1\le q,l\le d}\{m_{q,l}\omega_l,d+1\}$. Then

\begin{align*}
\sum _{n\in\mathbb Z^d}\hat f(n/t)\lesssim&\sum_{\{0\le\mathrm d(n)<s\}}\!\!\epsilon^{-\tilde\gamma_k}\left(|n|t^{-1}\right)^{-(d-1)/2}+\sum_{\{s\le \mathrm d(n)< 1\}}\epsilon^{-\tilde\gamma_k}\left(st^{-1}\right)^{-k}\\
&+\sum _{l=1}^{+\infty}\sum _{\left\{2^{l-1}\le\mathrm d(n)< 2^l\right\}}\epsilon^{-\tilde\gamma_k}\left(2^{l-1}t^{-1}\right)^{-k}.
\end{align*}
 Note that Lemma 29 in \cite[p. 195]{convexdomain} implies that if $n\in\{0\le \mathrm d(n)< s\}$, then $|n|\ge r$.
Hence
\begin{equation}\label{ft1}
\begin{split}
\sum _{n\in\mathbb Z^d}\hat f(n/t)\lesssim&\epsilon^{-\tilde\gamma_k}(r^{-1}t)^{(d-1)/2}\sum_{\{0\le\mathrm d(n)<s\}}1+\epsilon^{-\tilde\gamma_k}s^{-k}t^k\sum_{\{s\le\mathrm d(n)< 1\}}1\\
&+ \epsilon^{-\tilde\gamma_k}t^k\sum _{l=1}^{+\infty}2^{-lk}\sum _{\left\{ 2^{l-1}\le\mathrm d(n)< 2^l\right\}}1.
\end{split}
\end{equation}
By using Theorem 1.1 in \cite[p. 67]{model2}, we have
\begin{equation}\label{ft2}
\begin{split}
\sum _{\{0\le\mathrm d(n)<s\}}1&\le\begin{aligned}[t]&\left|\sum _{n\in\mathbb Z^d}\chi_{2(r+s)\mathcal D}(n)-\vol(2(r+s)\mathcal D)\right|+\Bigg|\sum _{n\in\mathbb Z^d}\chi_{2(r-s)\mathcal D}(n)\\
&-\vol(2(r-s)\mathcal D)\Bigg|+\vol(2(r+s)\mathcal D)-\vol(2(r-s)\mathcal D)\\
\end{aligned}\\
&\lesssim r^{(d-1)(1-1/\omega')}+r^{d-1}s\\
&\lesssim r^{(d-1)(1-1/\omega')}.
\end{split}
\end{equation}
On the other hand, notice that
\begin{align*}
\sum _{\left\{0\le \mathrm d(n)< 2^l\right\}}1\lesssim\left\{\begin{array}{ll}
r^{d-1}2^l&\textrm{if $2^l<2r$}, \\
2^{ld}&\textrm{if $2^l\ge2r$}.
\end{array}
\right.
\end{align*}
Then
\begin{equation}\label{ft3}
\epsilon^{-\tilde\gamma_k}s^{-k}t^k\sum _{\{ s\le\mathrm d(n)< 1 \}}1\lesssim\epsilon^{-\tilde\gamma_k}t^kr^{(d-1)(1+k/\omega')}
\end{equation}
with the choice $s=r^{-(d-1)/\omega'}$ and
\begin{equation}\label{ft4}
\sum_{l=1}^{+\infty}2^{-lk}\sum_{\{ 2^{l-1}\le \mathrm d(n)< 2^l\}}1
=\sum_{l=1,2^l\le 2r}^{+\infty}2^{-lk}r^{d-1}2^l+\sum_{l=1,2^l\ge 2r}^{+\infty}2^{-lk}2^{ld}
\lesssim r^{d-1}.
\end{equation}
Therefore, combing \eqref{ft1}--\eqref{ft4} yields
\begin{align*}
\sum _{n\in\mathbb Z^d}\hat f(n/t)&\lesssim \epsilon^{-\tilde\gamma_k}t^{\frac{d-1}{2}}r^{(d-1)\left(\frac{1}{2}-\frac{1}{\omega'}\right)}\left(1+t^{k-\frac{d-1}{2}}
r^{(d-1)\left(\frac{1}{2}+\frac{k+1}{\omega'}\right)}\right)\\
&\lesssim\epsilon^{-\tilde\gamma_k}t^{\frac{d-1}{2}}r^{(d-1)\left(\frac{1}{2}-\frac{1}{\omega'}\right)}\!\left(\!1+\!r^{-\alpha k+\alpha\frac{d-1}{2}+(d-1)\left(\frac{1}{2}+\frac{1}{\omega'}\right)+\frac{d-1}{\omega'}k}\right) \label{Y2}\\
&\lesssim \epsilon^{-\tilde\gamma_k}t^{\frac{d-1}{2}}r^{(d-1)\left(\frac{1}{2}-\frac{1}{\omega'}\right)},
\end{align*}
where the second inequality holds because of $t\lesssim r^{-\alpha}$. So we get
\begin{equation}\label{I1}
I_1=r^{d-1}t\sum _{n\in\mathbb Z^d}\hat f(n/t)\lesssim r^{d-1}t\epsilon^{-\tilde\gamma_k}t^{\frac{d-1}{2}}r^{(d-1)\left(\frac{1}{2}-\frac{1}{\omega'}\right)}.
\end{equation}
Hence if we take
\begin{equation*}
\epsilon=\left(t^{\frac{d-1}{2}}
r^{(d-1)\left(\frac{1}{2}-\frac{1}{\omega'}\right)}\right)^{1/\left(\Lambda+\tilde\gamma_k\right)},
\end{equation*}
combining \eqref{II}, \eqref{I2} and \eqref{I1} yields
\begin{equation*}
Y(r,t)\lesssim I_1+I_2+II\lesssim r^{d-1}t^{1+(d-1)\left(\frac{1}{2}
-\frac{1}{\alpha}\left(\frac{1}{2}-\frac{1}{\omega'}\right)\right)
\Lambda/\left(\Lambda+\tilde\gamma_k\right)},
\end{equation*}
where the implicit constant is independent of $r$ and $t$.
This finishes the proof.
\end{proof}

%%%%%%%%%%%%%%%%%%%%%%%%%%%%%%%%%%%%%%%%%%%%%%%%%%%%%%%%%%%%%%%%%%%%%%%%%%%%%%%%%%%%%%%%%%%%%%%%%%%%%%%%%%%%%%%%%%%%%%%%%%%%%%%%%%%%%%%%%%%%%%%%%%%

%%%%%%%%%%%%%%%%%%%%%%%%%%%%%%%%%%%%%%%%%%%%%%%%%%%%%%%%%%%%%%%%%%%%%%%%%%%%%%%%%%%%%%%%%%%%%%%%%%%%%%%%%%%%%%%%%%%%%%%%%%%%%%%%%%%%%%%%%%%%%%%%%%%%%%%%%%%

\section{Appendix}
\textbf{1. Proof of Lemma \ref{lemma1}.}

Given any $x'\in \partial\mathcal B$, we may assume $x'$ has a small neighborhood denoted by $U_{x'}$ on $\partial \mathcal B $ that can be given as a graph $x_d=f(x_1,\ldots,x_{d-1})$ and the $d^{th}$ component of $n(x')$ is positive, as all other cases can be proved similarly. Then
\begin{equation}\label{normal}
n(x)=\frac{\left(-\nabla f(x_1,\ldots,x_{d-1}),1\right)}{\left(1+\left|\nabla f(x_1,\ldots,x_{d-1})\right|^2\right)^{1/2}}.
\end{equation}
For any $x\in U_{x'}$, we can parameterize $x$ as
\begin{equation*}
\begin{split}
x=x'+\sum _{j=1}^{d}X_j\vec{\mathbf{t}}^j,
 \end{split}
\end{equation*}
 where $\vec {\mathbf{t}}^d=n(x')$ and
 \begin{equation*}
 \{\vec {\mathbf{t}}^1,\ldots,\vec {\mathbf{t}}^{d-1}\}=\left\{\left(t_1^1,\dots,t_d^1\right),\ldots,\left(t_1^{d-1},\dots,t_d^{d-1}\right)\right\}
  \end{equation*}
  is an orthonormal basis of the tangent plane of $\partial \mathcal B$ at $x'$ such that the basis $\{\vec {\mathbf{t}}^1,\ldots, \vec {\mathbf{t}}^d\}$ has the same orientation as
$\{\mathbf{e_1},\ldots,\mathbf{e_d}\}$. It is easy to get that
 \begin{equation}\label{determinant ti,j}
 \det\left(\textbf( t_j^i\textbf)_{i,j=1}^{d-1}\right)=\left(1+|\nabla f(x_1',\ldots,x_{d-1}')|^2\right)^{-1/2}.
\end{equation}

Let
\begin{equation*}
F(X)=x_d'+\sum_{j=1}^{d}X_j t_d^j- f\left(x_1'+\sum_{j=1}^{d}X_j t_1^j,\ldots,x_{d-1}'+\sum_{j=1}^{d}X_j t_{d-1}^j\right).
\end{equation*}
Then for any $1\le i\le d$,
\begin{equation}\label{ab value of normal}
 \partial_{X_i} F(\vec 0)=\left\{\begin{array}{ll}
 \left(1+|\nabla f(x_1',\ldots, x_{d-1}')|^2\right)^{1/2}\vec {\mathbf{t}}^d\cdot \vec {\mathbf{t}}^i=0&\text{if}\,  i<d,\\
  \left(1+|\nabla f(x_1',\ldots, x_{d-1}')|^2\right)^{1/2}\ne 0&\text{if}\,i=d.
 \end{array}
 \right.
\end{equation}
Note that $x_d=f(x_1,\ldots,x_{d-1})$ implies $F(X)=0$ and \eqref{ab value of normal} ensures that there is a small neighborhood of $\vec 0$ and a function $g$ defined in this neighborhood such that $g(\vec 0)=0$, $\nabla g(\vec 0)=\vec 0$ and
\begin{equation*}
F(X_1,\ldots,X_{d-1},g(X_1,\ldots,X_{d-1}))=0.
\end{equation*}
 Combining \eqref{determinant ti,j} and \eqref{ab value of normal} yields
\begin{align*}
\det\left(\nabla^2g(\vec{0})\right)&=\frac{\left(\det\left(\textbf( t_j^i\textbf)_{i,j=1}^{d-1}\right)\right)^2\det\left(\nabla^2 f(x_1',\ldots, x_{d-1}')\right)}{\left(1+|\nabla f(x_1',\ldots, x_{d-1}')|^2\right)^{(d-1)/2}}\\
&=\frac{\det\left(\nabla^2 f(x_1',\ldots, x_{d-1}')\right)}{\left(1+|\nabla f(x_1',\ldots, x_{d-1}')|^2\right)^{(d+1)/2}}.
\end{align*}
Hence
\begin{equation*}
K\left(x'\right)=\frac{\left|\det\left(\nabla^2 f(x_1',\ldots, x_{d-1}')\right)\right|}{\left(1+|\nabla f(x_1',\ldots, x_{d-1}')|^2\right)^{(d+1)/2}}.
\end{equation*}

If $K(x')\ne 0$. Let $\xi/|\xi|=n(x)$ and $\xi'/|\xi'|=n(x')$. Define
\begin{align*}
&\mathbf{G}(\xi_1,\ldots,\xi_d,x_1,\ldots,x_{d-1})\\
&=\left(\xi_1+\xi_d\partial_{x_1} f(x_1,\ldots,x_{d-1}) ,\ldots,\xi_{d-1}+\xi_d\partial_{ x_{d-1}} f(x_1,\ldots,x_{d-1})\right).
\end{align*}
Then
\begin{align}\label{implicit group}
\begin{split}
&|\det\left(D_{x} \mathbf{G}(\xi_1',\ldots,\xi_d',x_1',\ldots,x_{d-1}')\right)|\\
&=\xi_d'^{d-1}|\det\left(\nabla^2f(x_1',\ldots, x_{d-1}')\right)|\\
&=\xi_d'^{d-1}K\left(x'\right)\left(1+|\nabla f(x_1',\ldots, x_{d-1}')|^2\right)^{(d+1)/2}\\
&\ne0.
\end{split}
\end{align}
Note that equation \eqref{normal} implies
\begin{align*}
\mathbf{G}(\xi_1,\ldots,\xi_d,x_1,\ldots,x_{d-1})=\vec 0.
\end{align*}
The implicit function theorem and \eqref{implicit group} ensure that there is a small neighborhood of $\xi'/|\xi'|$ on $S^{d-1}$ denoted by $V_{\xi'/|\xi'|}$ and functions $h_1,\ldots,h_{d-1}$ defined in $V_{\xi'/|\xi'|}$ such that for any $\xi/|\xi|\in V_{\xi'/|\xi'|}$,
\begin{align*}
\mathbf{G}(\xi_1,\ldots,\xi_d,h_1(\xi_1,\ldots,\xi_d),\ldots,h_{d-1}(\xi_1,\ldots,\xi_d))=\vec 0
\end{align*}
and \eqref{inverse of Gauss map} and \eqref{inverse of Gauss map q} can be obtained directly by calculation.
This finishes the proof.

\textbf{2. Proof of Lemma \ref{relation}.}

For the domain $\mathcal D$, given any nonzero $\xi\in \mathbb R^d$, we may assume $\xi_i\ge 0$ for $1\le i\le d$ and $\xi_d/|\xi|\ge\varepsilon_0$, as all other cases can be proved similarly. Then there is a small neighbourhood of $x(\xi)$ on $\partial\mathcal D$ such that for any point $x$ in this neighborhood, the $d^{th}$ component
\begin{align*}
x_d&=\left(\left(1-\sum _{p=0}^{n-2}\left(\sum _{l=1+d_{p}}^{d_{p+1}} x_{l}^{\omega_{l}}
\right)^{m_{p+1}}\right)^{1/m_n}-\sum _{l=1+d_{n-1}}^{d-1} x_{l}^{\omega_{l}}\right)^{1/\omega_d}\\
&=:f(x_1,x_2,\cdots,x_{d-1})
\end{align*}
and $x_d\asymp 1$.
Since $\xi/|\xi|\asymp (-\nabla f(x_1,\cdots,x_{d-1}),1)$, we obtain
\begin{equation}\label{lablaf}
\xi_i/|\xi|\asymp \left(\sum _{l=1+d_{p(i)}}^{d_{p(i)+1}}\!x_l^{\omega_l}\right)^{m_{d,i}-1}x_i^{\omega_i-1}
\end{equation}
for $1\le i\le d-1$, which implies
\begin{equation}\label{xi}
\left(\xi_i/|\xi|\right)^{1/(\omega_i-1)}\lesssim x_i\lesssim \left(\xi_i/|\xi|\right)^{1/(m_{d,i}\omega_i-1)},
\end{equation}
where the implicit constants depend on the domain $\mathcal D$ and $\varepsilon_0$.

By a straightforward calculation we have
\begin{equation}\label{determinant}
\begin{split}
\left|\det(\nabla^2f)\right| \asymp \prod _{i=1}^{d-1}\left(\sum _{l=1+d_{p(i)}}^{d_{p(i)+1}}x_l^{\omega_l}\right)^{m_{d,i}-1}x_i^{\omega_i-2},
\end{split}
\end{equation}
where the implicit constants depend on the domain $\mathcal D$ and $\varepsilon_0$.
Combining \eqref{Gaussiancurvature}, \eqref{lablaf}--\eqref{determinant} yields
\begin{equation*}
\prod _{i=1 }^{d-1}\left(\xi_i/|\xi|\right)^{\frac{m_{d,i}\omega_i-2}{m_{d,i}\omega_i-1}}\le K(x(\xi))\lesssim \prod _{i=1}^{d-1}\left(\xi_i/|\xi|\right)^{\frac{\omega_i-2}{\omega_i-1}}.
\end{equation*}
This finishes the proof.

\textbf{3. Gaussian curvature of hypersurface $x(\xi)\cdot\xi=h$.}

Given any $\xi\in\{\xi\in \mathbb R^d\setminus \{\vec 0\}:x(\xi)\cdot\xi=h\}$, we may assume the $d^{th}$ component $x_d(\xi)\ge \varepsilon_0$, as all other cases can be proved similarly. Let $F(\xi)=x(\xi)\cdot\xi-h$. Then $\nabla F(\xi)= x(\xi)$ implies that there is a neighborhood of $\xi$ such that $x(\xi)\cdot\xi=h$ can be written as $\xi_d=g(\xi_1,\ldots,\xi_{d-1})$ by the implicit function theorem. Moreover,
\begin{equation*}
\nabla g(\xi_1,\ldots,\xi_{d-1})=\left(-x_1(\xi)/x_d(\xi),\ldots,-x_{d-1}(\xi)/x_d(\xi)\right)
 \end{equation*}
 and
\begin{equation}\label{h}
\begin{split}
&\det\left(\nabla^2g(\xi_1,\ldots,\xi_{d-1})\right)\\
&=(x_d(\xi))^{-2(d-1)}\det\left(\left(-x_d(\xi)\partial_{\xi_j} x_i(\xi)+x_i(\xi)\partial{\xi_j} x_d(\xi)\right)_{i,j=1}^{d-1}\right).
\end{split}
\end{equation}

Starting from the second row, add the $i^{th}$ row multiplied by $-x_{i-1}/x_{i}$ to the $(i-1)^{th}$ row for $i=2,\ldots,d-1$. Then
\begin{equation}\label{APmatrix3}
\begin{split}
&\det\left(\nabla^2g(\xi_1,\ldots,\xi_{d-1})\right)\\
&=(x_d(\xi))^{-(d-1)}\det\left(\left(-\frac{\partial x_i(\xi)}{\partial \xi_j}+\frac{x_i}{x_{i+1}}\frac{\partial x_{i+1}(\xi)}{\partial \xi_j}\right)_{i,j=1}^{d-1}\right).
\end{split}
\end{equation}
Since $x_d(\xi)\ge\varepsilon_0$, the boundary $\partial\mathcal D$ in a small neighborhood of $x(\xi)$ can be given as a graph $x_d=f(x_1,\ldots,x_{d-1})$ and by \eqref{normal}
\begin{equation*}
\frac{\partial x_d}{\partial \xi_j}=\sum _{l=1}^{d-1}-\xi_l\xi_d^{-1}\frac{\partial x_l}{\partial\xi_j}
\end{equation*}
for $1\le j\le d-1$. Plugging this equality into the $(d-1)^{th}$ row in \eqref{APmatrix3} and adding the $i^{th}$ row multiplied by
\begin{equation*}
\begin{split}
\left\{\begin{array}{ll}
-\frac{x_{d-1}\xi_1}{x_d\xi_d}&\text{if $i=1$,}\\
-\sum _{l=1}^{i-1}\frac{x_{d-1}\xi_lx_l}{x_d\xi_dx_i}-\frac{x_{d-1}\xi_i}{x_d\xi_d}&\text{if $2\le i\le d-2$}
\end{array}
\right.
\end{split}
\end{equation*}
 to the $(d-1)^{th}$ row yields that the element in $(d-1)^{th}$ row and $j^{th}$ column becomes
\begin{equation*}
\left(-1-\sum _{l=1}^{d-1}\frac{\xi_lx_l}{x_d\xi_d}
\right)\frac{\partial x_{d-1}(\xi)}{\partial \xi_j}=-\frac{h}{x_d\xi_d}\frac{\partial x_{d-1}(\xi)}{\partial \xi_j},
\end{equation*}
where we use $x(\xi)\cdot\xi=h$. So the right side of \eqref{APmatrix3} can be reduced to
\begin{equation*}
\begin{split}
\frac{(-1)^{d-1}h}{\xi_dx_d^{d}}\det\left(\left(\frac{\partial x_i}{\partial \xi_j}\right)_{i,j=1}^{d-1}\right)=\frac{x_d^{-d}h\det\left(\textbf(A_{ji}\textbf)_{i,j=1}^{d-1}\right)}{\xi_d^d\left(\det\left(\nabla^2f\right)\right)^{d-1}}
\!=\frac{x_d^{-d}h}{\xi_d^d\det\left(\nabla^2f\right)},
\end{split}
\end{equation*}
where the first equality is because of \eqref{inverse of Gauss map} and
\begin{equation*}
\det\left(\textbf( A_{ji}\textbf)_{i,j=1}^{d-1}\right)=\left(\det\left(\nabla^2f\right)\right)^{d-2}
\end{equation*}
since $\textbf( A_{ji}\textbf)_{i,j=1}^{d-1}$ is the adjoint matrix of $\nabla^2f$. Then
\begin{equation*}
\begin{split}
\widetilde K(\xi)&=\left(1+|\nabla g|^2\right)^{-(d+1)/2}|\det(\nabla^2g)|\\
&=|x(\xi)|^{-(d+1)}
\frac{x_dh}{\xi_d^dK(x(\xi))(1+|\nabla f|^2)^{(d+1)/2}}.
\end{split}
\end{equation*}
Since $ x_d\asymp1$ and $h=x(\xi)\cdot\xi\asymp|\xi|\asymp \xi_d$, we have
\begin{align*}
\widetilde K(\xi)\asymp (K(x(\xi)))^{-1}h^{-(d-1)}.
\end{align*}
 This finishes the proof.

\textbf{4. Counterexample.}

In this part, we give an example to show that the assumption of the range of $\alpha$ in Theorem \ref{variance} cannot be removed. Let
\begin{equation*}
\mathcal D=\{x\in\mathbb R^3 : x_1^6+x_2^6+x_3^{10}\le1\}
\end{equation*}
and $\mathcal D(r,t)=(r+t/2)\mathcal D\setminus (r-t/2)\mathcal D$ with $r=k\in \mathbb N$ and $t=Cr^{-2}$. In this case $\alpha=2<\max\{\alpha_1,\alpha_2,\alpha_3\}=4$ and
\begin{align*}
\vol(\mathcal D(r,t))=\vol(\mathcal D)\left(3C+2(t/2)^3\right).
\end{align*}

If $(0,k,x_3),(x_1,k,0)\in \partial ((k+t/2)\mathcal D)$ then
\begin{align*}
x_3=\left(\left((k+t/2)^6-k^6\right)(k+t/2)^4\right)^{1/10}
\ge \left(k^9t\right)^{1/10}
\end{align*}
and
\begin{align*}
x_1=\left((k+t/2)^6-k^6\right)^{1/6}\ge \left(k^5t\right)^{1/6}.
\end{align*}
For any $u\in[-1/2,1/2)\times[-t/2,0]\times[-1/2,1/2)$, if $k$ is large enough,
\begin{align*}
N_{\mathcal D(r,t)}(u)-\vol(\mathcal D(r,t))\gtrsim \left(k^9t\right)^{1/10}\left(k^5t\right)^{1/6}-\vol(\mathcal D(r,t))\gtrsim k^{6/5}.
\end{align*}
 Then
\begin{align*}
\int_{\mathbb{T}^3}\left|N_{\mathcal D(r,t)}(u)-\vol(\mathcal D(r,t))\right|^2\,\mathrm du\gtrsim k^{12/5}t/2\gtrsim k^{2/5},
\end{align*}
which means the variance is much larger than $\vol(\mathcal D(r,t))$.

%%%%%%%%%%%%%%%%%%%%%%%%%%%%%%%%%%%%%%%%%%%%%%%%%%%%%%%%%%%%%%%%%%%%%%%%%%%%%%%%%%%%%%%%%%%%%%%%%%%%%%%%%%%%%%%%%%%%%%%%%%%%%%%%%%%%%%%%%%%%%%%%%%%

%%%%%%%%%%%%%%%%%%%%%%%%%%%%%%%%%%%%%%%%%%%%%%%%%%%%%%%%%%%%%%%%%%%%%%%%%%%%%%%%%%%%%%%%%%%%%%%%%%%%%%%%%%%%%%%%%%%%%%%%%%%%%%%%%%%%%%%%%%%%%%%%%%%%%%%%%%%%%%%%

\end{document}